\renewcommand{\figurename}{Fig.}
\theoremstyle{plain}
\newtheorem{theorem}{Theorem}
\newtheorem{corollary}{Corollary}
\newtheorem{lemma}{Lemma}
\newtheorem{proposition}{Proposition}
\theoremstyle{definition}
\newtheorem{definition}{Definition}
\newtheorem*{remark}{Remark}
\newtheorem*{remarks}{Remarks}
\DeclareMathOperator{\spn}{span}
\DeclareMathOperator{\diag}{diag}
\DeclareMathOperator{\sym}{Sym}
\renewcommand{\hat}{\widehat}
\renewcommand{\tilde}{\widetilde}
\begin{document}

\title{A Cram\'er--Wold theorem for elliptical distributions}

\date{8 March 2023}

\author{Ricardo Fraiman}
\address{Centro de Matem\'atica, Facultad de Ciencias, Universidad de la Rep\'ublica, Uruguay.} 
\email{rfraiman@cmat.edu.uy} 

\author{Leonardo Moreno}
\address{Instituto de Estad\'{i}stica, Departamento de M\'etodos Cuantitativos, FCEA, Universidad de la Rep\'ublica, Uruguay.}
\email{mrleo@iesta.edu.uy} 

\author{Thomas Ransford}
\address{D\'epartement de math\'ematiques et de statistique, Universit\'e Laval,
Qu\'ebec City (Qu\'ebec),  Canada G1V 0A6.}
\email{ransford@mat.ulaval.ca}

\begin{abstract}
According to a well-known theorem of Cram\'er and Wold, 
if $P$ and $Q$ are two Borel probability measures on $\mathbb{R}^d$
whose projections $P_L,Q_L$ onto each line $L$ in $\mathbb{R}^d$ satisfy $P_L=Q_L$, then $P=Q$.
Our main result is that, if $P$ and $Q$ are both elliptical distributions,
then, to show that $P=Q$, it suffices merely to check that $P_L=Q_L$ for a certain set of $(d^2+d)/2$ lines $L$.
Moreover $(d^2+d)/2$ is optimal. The class of elliptical distributions contains the Gaussian
distributions as well as many other multivariate distributions of interest.
Our theorem contrasts with  other variants of the Cram\'er--Wold theorem, 
in that no assumption is made about the finiteness of moments of $P$ and $Q$.
We use our results to derive a statistical test for equality of elliptical distributions,
and carry out a small simulation study of the test, comparing it
with other tests from the literature. We also give an
application to learning (binary classification), again illustrated with a small simulation.
\end{abstract}

\keywords{Cram\'er--Wold, projection, elliptical distribution, Kolmogorov--Smirnov test}

\makeatletter
\@namedef{subjclassname@2020}{\textup{2020} Mathematics Subject Classification}
\makeatother

\subjclass[2020]{Primary 60B11, Secondary 60E10, 62H15}

\maketitle

\section{Introduction and statement of main results}

Given a Borel probability measure $P$ on $\mathbb{R}^d$ and a vector subspace $H$ of $\mathbb{R}^d$,
we write $P_H$ for the projection of $P$ onto $H$, namely the Borel probability measure on $H$ given by
\[
P_H(B):=P(\pi_H^{-1}(B)),
\]
where $\pi_H:\mathbb{R}^d\to H$ is the orthogonal projection of $\mathbb{R}^d$ onto $H$.
We shall be particularly interested in the case when $H$ is a line $L$.
One can  view $P_L$ as the marginal distribution of $P$ along $L$.

According to a well-known theorem of Cram\'er and Wold \cite{CW36}, 
if $P,Q$ are two Borel probability measures on $\mathbb{R}^d$, 
and if $P_L=Q_L$ for all lines $L$, then $P=Q$.
In other words, a probability measure on $\mathbb{R}^d$ 
is determined by its complete set of one-dimensional marginal
distributions.

There are several extensions of this theorem,
in which one assumes more about the nature of the measures $P,Q$ 
and less about the set of lines $L$ for which $P_L=Q_L$.
For example, if $P$ and $Q$ have  moment generating functions that are finite in a neighbourhood of the origin,
and if $P_L=Q_L$ for all lines $L$ in a set of positive measure on the unit sphere,
then $P=Q$. 
Articles on this subject include those of R\'enyi \cite{Re52},
Gilbert \cite{Gi55},  B\'elisle--Mass\'e--Ransford \cite{BMR97}
and Cuesta-Albertos--Fraiman--Ransford \cite{CFR07}.

If one assumes yet more about $P$ and $Q$, then it is even possible
to differentiate between them using only a finite set of projections.
Heppes \cite{He56} showed that, if $P$ and $Q$ are supported on a finite set of cardinality $k$,
and if $H_1,\dots,H_{k+1}$ are vector subspaces such that $H_i^\perp\cap H_j^\perp=\{0\}$ whenever $i\ne j$,
then $P_{H_j}=Q_{H_j}$ for all~$j$ implies that $P=Q$.

Another result of this kind, due to Gr\"ochenig and Jaming
\cite{GJ20}, is that a probability measure supported
on a quadratic hypersurface in $\mathbb{R}^d$ is determined
by its projections onto  two generic hyperplanes. The proof is based on the 
notion of a Heisenberg uniqueness pair, introduced in \cite{HM11}.

Our goal in this note is to establish an analogue
of these results for a certain family of continuous distributions,
namely the so-called elliptical distributions. Here is the definition.

\begin{definition}\label{D:elliptical}
A Borel measure $P$ on $\mathbb{R}^d$ is an elliptical distribution
on $\mathbb{R}^d$ if its characteristic function has the form
\begin{equation}\label{E:elliptical}
\phi_P(\xi)=e^{i\mu\cdot\xi}\psi(\xi^{\top} \Sigma \xi),
\quad \xi\in\mathbb{R}^d,
\end{equation}
where $\Sigma$ is a real positive semi-definite $d\times d$ matrix, 
$\mu$ is a vector in $\mathbb{R}^d$, and $\psi:[0,\infty)\to\mathbb{C}$ is a continuous function.
The measure $P$  is said to be centred if $\mu=0$.
\end{definition}

 If $P$ is an elliptical distribution with  finite second moments, then $\mu$ represents the mean and $\Sigma$ the covariance matrix. However, there are  elliptical distributions even whose first moments are infinite.

The most important  elliptical distributions are surely the Gaussian distributions. They correspond to taking $\psi(t)=e^{-t/2}$ in \eqref{E:elliptical}. However, there are numerous  other  examples of interest,
including multivariate Student distributions, Cauchy distributions, Bessel distributions, logistic distributions, stable laws, Kotz-type distributions, and multi-uniform distributions.
For background on the general theory of elliptical distributions, we refer to \cite{CHS81} and \cite{FKN90}. The latter reference also contains a more complete list of examples (see \cite[Table~3.1]{FKN90}).

To state our results, we need a further definition.

\begin{definition}
A set $S$ of vectors in $\mathbb{R}^d$ is a 
symmetric-matrix uniqueness set (or sm-uniqueness set for short)
if the only real symmetric $d\times d$ matrix $A$ satisfying
$x^{\top} Ax=0$ for all $x\in S$ is the zero matrix.
\end{definition}

For example, if $v_1,\dots,v_d$ are linearly independent vectors in $\mathbb{R}^d$
and if we define $S:=\{v_i+v_j:1\le i\le j\le d\}$, then $S$ is an sm-uniqueness set.
Also, every sm-uniqueness set must contain at least $(d^2+d)/2$ elements,
so the example $S$ above is minimal. We shall justify these statements in
\S\ref{S:uniqueness} below, where sm-uniqueness sets will be discussed in more detail.

We can now state our first main theorem.
Given $x\in\mathbb{R}^d\setminus\{0\}$, we denote by $\langle x\rangle$
the one-dimensional subspace spanned by $x$.

\begin{theorem}\label{T:CWelliptical}
Let $S\subset\mathbb{R}^d$ be a symmetric-matrix uniqueness set.
If $P,Q$ are elliptical measures on $\mathbb{R}^d$ such that $P_{\langle x\rangle}=Q_{\langle x\rangle}$ for all $x\in S$, then $P=Q$.
\end{theorem}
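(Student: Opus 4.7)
The plan is to recast the hypothesis in terms of characteristic functions. For $x\in\mathbb{R}^d\setminus\{0\}$, the projection $P_{\langle x\rangle}$ is a measure on $\langle x\rangle$ whose characteristic function at $sx$ $(s\in\mathbb{R})$ equals $\phi_P(sx)$, so the hypothesis becomes
\[
e^{is\mu_P\cdot x}\,\psi_P\bigl(s^2 x^{\top}\Sigma_P x\bigr)=e^{is\mu_Q\cdot x}\,\psi_Q\bigl(s^2 x^{\top}\Sigma_Q x\bigr) \qquad (s\in\mathbb{R},\ x\in S).
\]
A preliminary observation: the identity $\phi(-\xi)=\overline{\phi(\xi)}$ forces $\psi_P$ and $\psi_Q$ to take real values on the ranges of $\xi\mapsto\xi^{\top}\Sigma_P\xi$ and $\xi\mapsto\xi^{\top}\Sigma_Q\xi$.

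The first step is to identify the means. Continuity of the generators together with $\psi_P(0)=\psi_Q(0)=1$ ensure that, for $s$ in a small neighbourhood of the origin, both radial factors above are real and close to $1$, so their quotient $\exp\bigl(is(\mu_P-\mu_Q)\cdot x\bigr)$ is real throughout a whole open interval. This forces $(\mu_P-\mu_Q)\cdot x=0$. Any sm-uniqueness set $S$ necessarily spans $\mathbb{R}^d$---were it contained in a proper subspace $V$, then for any nonzero $u\in V^{\perp}$ the matrix $uu^{\top}$ would be a nonzero symmetric matrix with $x^{\top}uu^{\top}x=(u\cdot x)^2=0$ on $S$. Hence $\mu_P=\mu_Q$, and by translation we may assume this common vector is $0$.

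With $\mu_P=\mu_Q=0$, the identity reduces to $\psi_P(tb_x)=\psi_Q(tb'_x)$ for all $t\ge 0$ and $x\in S$, where $b_x:=x^{\top}\Sigma_P x$ and $b'_x:=x^{\top}\Sigma_Q x$. The point-mass case, in which $P$ or $Q$ equals a Dirac mass, is disposed of directly, so assume neither is a point mass. Some $x_0\in S$ must then satisfy $b'_{x_0}>0$ (otherwise sm-uniqueness would yield $\Sigma_Q=0$ and thence $Q=\delta_0$), and any mixed configuration $b_x=0$, $b'_x>0$ is ruled out because it would force $\psi_Q\equiv 1$. Setting $c:=b_{x_0}/b'_{x_0}$, the identity at $x_0$ gives $\psi_P(u)=\psi_Q(u/c)$ on $[0,\infty)$; applying the same reasoning to any other $x_1\in S$ with $b'_{x_1}>0$ produces $\psi_Q(u)=\psi_Q(ru)$ for all $u\ge 0$ with $r:=cb'_{x_1}/b_{x_1}$, and iterating this identity together with continuity of $\psi_Q$ at $0$ forces $r=1$. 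Thus $b_x=cb'_x$ for every $x\in S$, that is, $x^{\top}(\Sigma_P-c\Sigma_Q)x=0$ on $S$. Sm-uniqueness then yields $\Sigma_P=c\Sigma_Q$, which combined with $\psi_P(u)=\psi_Q(u/c)$ gives $\phi_P\equiv\phi_Q$ on $\mathbb{R}^d$, so $P=Q$.

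The main obstacle I anticipate is the scaling argument of the last paragraph: one must quarantine the point-mass degeneracies, exploit continuity of $\psi_Q$ at $0$ to pin down the scale constant $c$, and only then invoke sm-uniqueness to identify $\Sigma_P$ with $c\Sigma_Q$. By contrast, the extraction of the means is straightforward once one observes the reality of the generators.
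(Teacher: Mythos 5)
Your proof is correct and follows essentially the same route as the paper's: characteristic functions restricted to lines, the spanning property of sm-uniqueness sets to identify the means, reduction to a common generator up to a scale factor, the iterated functional equation $\psi(s)=\psi(as)\to\psi(0)$ to handle degeneracies and pin down that scale, and finally sm-uniqueness applied to the quadratic form $x^{\top}(\Sigma_P-c\,\Sigma_Q)x$. The only cosmetic differences are that the paper identifies the means by dividing the identity at $t$ by the one at $-t$ (rather than invoking reality of the generators) and absorbs the constant $c$ into $\Sigma_1$ before the final step, whereas you quarantine the point-mass cases explicitly at the outset.
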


The following corollary is worthy of note. It follows from the fact that, in $\mathbb{R}^2$, any set of three vectors, none of which is a multiple of the others, forms an sm-uniqueness set.

\begin{corollary}\label{C:CWelliptical}
An elliptical distribution in $\mathbb{R}^2$ is determined by its marginals along any three distinct lines.
\end{corollary}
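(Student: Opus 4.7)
The plan is to deduce the corollary directly from Theorem \ref{T:CWelliptical} by verifying the hinted fact: any three nonzero vectors $x_1,x_2,x_3\in\mathbb{R}^2$ whose directions are pairwise distinct form an sm-uniqueness set. Given three distinct lines $L_1,L_2,L_3$ through the origin, I would pick a nonzero direction vector $x_i\in L_i$. Since the $L_i$ are distinct, no $x_i$ is a scalar multiple of another. Once $S=\{x_1,x_2,x_3\}$ is shown to be sm-uniqueness, the hypothesis $P_{L_i}=Q_{L_i}$ translates to $P_{\langle x_i\rangle}=Q_{\langle x_i\rangle}$ for each $i$, and Theorem \ref{T:CWelliptical} immediately gives $P=Q$.

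To verify sm-uniqueness, observe that a real symmetric $2\times2$ matrix $A$ has three independent entries $A_{11},A_{12},A_{22}$, and if $x_i=(a_i,b_i)$ then the condition $x_i^{\top}Ax_i=0$ is the linear equation
\[
a_i^2A_{11}+2a_ib_iA_{12}+b_i^2A_{22}=0.
\]
The three conditions form a homogeneous linear system in $(A_{11},A_{12},A_{22})$ whose coefficient matrix has rows $(a_i^2,\,2a_ib_i,\,b_i^2)$. It suffices to show that the determinant of this $3\times3$ matrix is nonzero; then $A=0$ is forced.

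The computation of the determinant is the main (but routine) step. If no $x_i$ is vertical, scaling row $i$ by $a_i^{-2}$ replaces it with $(1,\,2b_i/a_i,\,(b_i/a_i)^2)$, exhibiting (up to the factor of $2$ in the middle column) a Vandermonde determinant in the slopes $s_i:=b_i/a_i$; distinct lines give distinct slopes, and so the determinant is nonzero. The case where one of the $x_i$ is vertical is handled separately by cofactor expansion along the row of the form $(0,0,b_i^2)$, reducing to a $2\times2$ determinant that is nonzero by the same non-parallel argument. I expect no genuine obstacle: the only care required is uniform treatment of the vertical and non-vertical directions, easily circumvented by working projectively if one prefers.
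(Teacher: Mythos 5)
Your argument is correct, but it takes a different route from the paper. The paper deduces the corollary from its Proposition~\ref{P:uniquenessexample} (the polarization identity): since $v_1,v_2$ are linearly independent and no $v_j$ is a multiple of another, one can rescale so that $v_3=v_1+v_2$, and then $\{v_1,v_2,v_3\}$ coincides, up to nonzero scalar factors (which do not affect the condition $x^\top Ax=0$), with the set $\{v_j+v_k:1\le j\le k\le 2\}$ already shown to be sm-uniqueness. You instead verify sm-uniqueness by a direct determinant computation, which is essentially the $d=2$ instance of the paper's later Corollary~\ref{C:criterion}: the matrix with rows $(a_i^2,\,2a_ib_i,\,b_i^2)$ is a generalized Vandermonde matrix whose determinant equals, up to the factor $2$, the product $\prod_{i<j}(a_ib_j-a_jb_i)$, each factor being nonzero exactly because no two of the vectors are proportional. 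That factorization would in fact let you avoid the vertical/non-vertical case split entirely, since it is manifestly projective. Your approach is more computational but self-contained and makes the determinant criterion explicit; the paper's is shorter because it recycles the polarization argument and needs no coordinates. Both are complete proofs.
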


\begin{remarks}
(i) Perhaps the main interest of the theorem lies in the fact that,
to differentiate between two elliptical distributions,
only  finitely many projections are needed.
In that sense, the  theorem is  an analogue of the theorems 
of Heppes and of Gr\"ochenig--Jaming, mentioned earlier. However, there are also important differences between those results and ours. The  results  of Heppes and of Gr\"ochenig--Jaming treat measures supported on certain types of subsets of $\mathbb{R}^d$, whereas Theorem~\ref{T:CWelliptical} treats continuous ones. Also, in their theorems,  the projections are onto general subspaces $H$, whereas in our result the projections are onto lines $L$, i.e.,  only one-dimensional projections are needed.

(ii) Among the numerous variants of the Cram\'er--Wold theorem
for continuous distributions,
 Theorem~\ref{T:CWelliptical} is unusual (maybe even unique) in that no assumption is made about the finiteness of moments of $P$ and $Q$. As mentioned earlier,
there are elliptical distributions whose first moments are infinite, for example the multivariate Cauchy distributions.

(iii) In Theorem~\ref{T:CWelliptical}, it is important to assume that both $P$ and $Q$ are elliptical distributions. If we suppose merely that one of them is elliptical, then the result no longer holds.
(In the terminology of \cite{BMR97},  Theorem~\ref{T:CWelliptical} is not a strong-determination result.)
Indeed, given any finite set $\mathcal{H}$
of $(d-1)$-dimensional subspaces of $\mathbb{R}^d$, there exist probability measures $P,Q$ on $\mathbb{R}^d$ with $P$ Gaussian and $P_H=Q_H$ for all $H\in\mathcal{H}$, but $P\ne Q$. Such examples were constructed by
Hamedani and Tata \cite{HT75} in the case $d=2$, and by
Manjunath and Parthasarathy \cite{MP12} for general~$d$.
\end{remarks}

We now turn to the question of sharpness. 
The following result shows that Theorem~\ref{T:CWelliptical} is optimal in a certain sense.
In particular, it explains why symmetric-matrix uniqueness sets enter the picture.

We say that a Borel probability measure $P$ on $\mathbb{R}^d$ is non-degenerate
 if it is not supported in any hyperplane in $\mathbb{R}^d$. 

\begin{theorem}\label{T:CWellipticalsharp}
Let $P$ be a non-degenerate elliptical distribution on $\mathbb{R}^d$.
Let $S\subset\mathbb{R}^d$, and suppose that $S$ is not a symmetric-matrix uniqueness set.
Then there exists a non-degenerate elliptical distribution $Q$ on $\mathbb{R}^d$
such that $P_{\langle x\rangle}=Q_{\langle x\rangle}$ for all $x\in S$, but $P\ne Q$.
\end{theorem}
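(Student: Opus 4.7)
The plan is to construct $Q$ by perturbing only the scale matrix of $P$, keeping the mean $\mu$ and the characteristic generator $\psi$ unchanged. Write $\phi_P(\xi)=e^{i\mu\cdot\xi}\psi(\xi^{\top}\Sigma\xi)$. Since $P$ is non-degenerate, $\Sigma$ must be positive definite (otherwise a null vector $v$ of $\Sigma$ would yield $\phi_P(tv)=e^{itv\cdot\mu}$, witnessing that $P$ is supported on the hyperplane $\{y:v\cdot y=v\cdot\mu\}$). Because $S$ is not a symmetric-matrix uniqueness set, there exists a nonzero real symmetric matrix $A$ with $x^{\top}Ax=0$ for every $x\in S$; and since the positive-definite cone is open in the space of symmetric matrices, $\Sigma':=\Sigma+\epsilon A$ is again positive definite for all sufficiently small $\epsilon>0$.

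To realise $(\mu,\Sigma',\psi)$ as a bona fide elliptical distribution, I would pick an invertible matrix $B$ with $B\Sigma B^{\top}=\Sigma'$ (for instance $B=(\Sigma')^{1/2}\Sigma^{-1/2}$) and let $Q$ be the pushforward of $P$ under the affine bijection $y\mapsto B(y-\mu)+\mu$. A direct calculation with characteristic functions then gives $\phi_Q(\xi)=e^{i\mu\cdot\xi}\psi(\xi^{\top}\Sigma'\xi)$, so $Q$ is elliptical with parameters $(\mu,\Sigma',\psi)$, and $Q$ inherits non-degeneracy from $P$ because the defining map is a bijection. For $x\in S$, the characteristic function of $P_{\langle x\rangle}$ evaluated at $tx\in\langle x\rangle$ equals $\phi_P(tx)=e^{it\mu\cdot x}\psi(t^2x^{\top}\Sigma x)$, and the analogous formula holds for $Q_{\langle x\rangle}$; since $x^{\top}\Sigma' x-x^{\top}\Sigma x=\epsilon\,x^{\top}Ax=0$, these characteristic functions coincide, so $P_{\langle x\rangle}=Q_{\langle x\rangle}$.

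The delicate step is verifying that $P\ne Q$, because the triple $(\mu,\Sigma,\psi)$ does not uniquely determine an elliptical distribution: one may rescale $\Sigma\mapsto c\Sigma$ at the cost of replacing $\psi(\cdot)$ by $\psi(\cdot/c)$. Suppose for contradiction that $\phi_P\equiv\phi_Q$, i.e.,
\[
\psi(\xi^{\top}\Sigma\xi)=\psi(\xi^{\top}\Sigma'\xi)\quad\text{for all }\xi\in\mathbb{R}^d.
\]
Since $A\ne 0$, the polarisation identity supplies a vector $\xi_0$ with $\xi_0^{\top}A\xi_0\ne 0$. Setting $a:=\xi_0^{\top}\Sigma\xi_0>0$ and $b:=\xi_0^{\top}\Sigma'\xi_0>0$ (with $a\ne b$), and applying the displayed identity along the ray $\{t\xi_0:t\in\mathbb{R}\}$, we obtain $\psi(sa)=\psi(sb)$ for every $s\ge 0$, hence $\psi(u)=\psi(ru)$ for every $u\ge 0$, where $r:=b/a\ne 1$. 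Iterating and invoking continuity of $\psi$ at $0$ (driving $r^n u$ or $r^{-n}u$ to $0$ according as $r<1$ or $r>1$) forces $\psi\equiv\psi(0)=1$. But then $\phi_P(\xi)=e^{i\mu\cdot\xi}$, so $P$ is the point mass at $\mu$, contradicting non-degeneracy. Hence $P\ne Q$, completing the construction.
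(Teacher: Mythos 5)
Your proposal is correct and follows essentially the same route as the paper: perturb $\Sigma$ to $\Sigma+\epsilon A$ with $A$ a nonzero symmetric matrix annihilated on $S$, realise the new parameters by pushing $P$ forward under an affine bijection (the paper isolates this as a separate lemma), and rule out $P=Q$ via the same iteration argument $\psi(u)=\psi(ru)\Rightarrow\psi\equiv\psi(0)$. The only cosmetic difference is that you establish the needed non-degeneracy facts inline (deducing that $\psi$ is non-constant from the final contradiction with $P=\delta_\mu$, and transferring non-degeneracy to $Q$ via the bijection) rather than through the paper's standalone characterization lemma.
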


As remarked earlier, a symmetric-matrix uniqueness set in $\mathbb{R}^d$  must contain at least $(d^2+d)/2$ elements.
We thus obtain the following corollary.

\begin{corollary}\label{C:CWellipticalsharp}
Let $P$ be a non-degenerate elliptical distribution on $\mathbb{R}^d$.
Let $\mathcal{L}$ be a set of lines in $\mathbb{R}^d$ containing strictly fewer than $(d^2+d)/2$ lines.
Then there exists a non-degenerate elliptical distribution $Q$ on $\mathbb{R}^d$
such that $P_{L}=Q_{L}$ for all $L\in\mathcal{L}$, but $P\ne Q$.
\end{corollary}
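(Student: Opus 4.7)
The plan is to deduce Corollary~\ref{C:CWellipticalsharp} directly from Theorem~\ref{T:CWellipticalsharp} by reducing the question about lines to a question about an sm-uniqueness set, and then invoking the stated lower bound on the cardinality of such sets.

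First I would choose, for each line $L\in\mathcal{L}$, a nonzero vector $x_L\in L$, and set $S:=\{x_L:L\in\mathcal{L}\}\subset\mathbb{R}^d$. Since distinct lines through the origin give non-collinear direction vectors, the map $L\mapsto x_L$ is injective, so $|S|=|\mathcal{L}|<(d^2+d)/2$. The paper has already asserted (with proof deferred to \S\ref{S:uniqueness}) that every symmetric-matrix uniqueness set in $\mathbb{R}^d$ must contain at least $(d^2+d)/2$ elements. Consequently $S$ cannot be an sm-uniqueness set.

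Next I would apply Theorem~\ref{T:CWellipticalsharp} to the non-degenerate elliptical distribution $P$ and this set $S$. The theorem provides a non-degenerate elliptical distribution $Q$ on $\mathbb{R}^d$ with $Q\ne P$ such that $P_{\langle x\rangle}=Q_{\langle x\rangle}$ for every $x\in S$. Since $\langle x_L\rangle = L$ by construction, this immediately gives $P_L=Q_L$ for every $L\in\mathcal{L}$, completing the proof.

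There is essentially no obstacle: the argument is a one-line reduction once Theorem~\ref{T:CWellipticalsharp} and the cardinality lower bound for sm-uniqueness sets are in hand. The only real content sits in those two inputs, both of which are either proved or scheduled to be proved elsewhere in the paper, so the corollary itself reduces to bookkeeping about identifying lines through the origin with (equivalence classes of) direction vectors.
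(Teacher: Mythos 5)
Your argument is correct and is exactly the deduction the paper intends: pick a direction vector for each line, note that the resulting set is too small to be an sm-uniqueness set by the cardinality lower bound, and apply Theorem~\ref{T:CWellipticalsharp}. The paper presents this corollary as an immediate consequence without writing out the details, and your proposal supplies precisely those details.
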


The rest of the paper is organized as follows.
In \S\ref{S:uniqueness} we discuss in more detail the notion of symmetric-matrix uniqueness sets.
The proofs of our two main results, Theorems~\ref{T:CWelliptical} and \ref{T:CWellipticalsharp}
are presented in \S\ref{S:proof1} and \S\ref{S:proof2} respectively.
In \S\ref{S:statistics} we use Theorem~\ref{T:CWelliptical} to derive a statistical test for equality of 
elliptical distributions, and we carry out a small simulation study of the test.
In \S\ref{S:binary}, we give a further application, to binary classification.
Finally, in \S\ref{S:conclusion} we make some concluding remarks and pose a question.


\section{Symmetric-matrix uniqueness sets}\label{S:uniqueness}

Recall that a set $S$ of vectors in $\mathbb{R}^d$ is a 
symmetric-matrix uniqueness set or sm-uniqueness set
if the only real symmetric $d\times d$ matrix $A$ satisfying
$x^{\top} Ax=0$ for all $x\in S$ is the zero matrix.
We now examine  these sets in more detail,
beginning with the following simple result.

\begin{proposition}\label{P:uniquenessexample}
Let $v_1,\dots,v_d$ be  linearly independent vectors in $\mathbb{R}^d$, and let
\[
S:=\{v_j+v_k: 1\le j\le k\le d\}.
\]
Then $S$ is an sm-uniqueness set. 
\end{proposition}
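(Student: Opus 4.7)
The plan is to use a polarization-type identity to extract, from the vanishing of the quadratic form $x^{\top}Ax$ on the set $S$, the vanishing of the bilinear form $(x,y)\mapsto x^{\top}Ay$ on pairs of basis vectors. Since $v_1,\dots,v_d$ span $\mathbb{R}^d$, this will force $A=0$.

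Concretely, suppose $A$ is a real symmetric $d\times d$ matrix with $(v_j+v_k)^{\top}A(v_j+v_k)=0$ for all $1\le j\le k\le d$. First I would take $j=k$: the identity becomes $4v_j^{\top}Av_j=0$, so $v_j^{\top}Av_j=0$ for each $j$. Next, for $1\le j<k\le d$, I would expand
\[
(v_j+v_k)^{\top}A(v_j+v_k)=v_j^{\top}Av_j+2v_j^{\top}Av_k+v_k^{\top}Av_k,
\]
using the symmetry of $A$ to combine the two cross terms. Substituting the diagonal information already obtained leaves $2v_j^{\top}Av_k=0$, hence $v_j^{\top}Av_k=0$ for all $j\ne k$ as well.

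At this point $v_i^{\top}Av_j=0$ for every pair $i,j$. Since $\{v_1,\dots,v_d\}$ is a basis of $\mathbb{R}^d$, bilinearity extends this to $x^{\top}Ay=0$ for all $x,y\in\mathbb{R}^d$, and therefore $A=0$. This confirms that $S$ is an sm-uniqueness set.

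There is no serious obstacle here: the argument is a direct polarization, and the only thing one needs to be careful about is to handle the diagonal ($j=k$) case separately before using it to eliminate the quadratic terms in the off-diagonal identity. The linear independence of the $v_i$ is used only at the very end to pass from vanishing on a basis to vanishing of the whole bilinear form.
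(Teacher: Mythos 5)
Your proof is correct and follows essentially the same polarization argument as the paper: the paper packages the two steps into the single identity $2v_j^\top Av_k=(v_j+v_k)^\top A(v_j+v_k)-\tfrac14(2v_j)^\top A(2v_j)-\tfrac14(2v_k)^\top A(2v_k)$, while you first extract the diagonal case $j=k$ and then substitute it into the expansion for $j<k$, which is the same computation. The final step, passing from $v_i^\top Av_j=0$ for all $i,j$ to $A=0$ via the basis property, also matches the paper.
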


\begin{proof}[\textbf{\upshape Proof:}]
If $A$ is any symmetric matrix, then, for all $j,k$, we have
\[
2v_j^\top A v_k=(v_j+v_k)^\top A(v_j+v_k)-\frac{1}{4}(v_j+v_j)^\top A(v_j+v_j)-
\frac{1}{4}(v_k+v_k)^\top A(v_k+v_k).
\]
Hence, if $x^\top Ax=0$ for all $x\in S$, then $v_j^\top Av_k=0$ for all $j,k$,
and so $A=0$.
\end{proof}

\begin{corollary}
If $v_1,v_2,v_3\in\mathbb{R}^2$ and no $v_j$ is a multiple of any other, then $\{v_1,v_2,v_3\}$ is an
sm-uniqueness set.
\end{corollary}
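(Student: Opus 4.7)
My plan is to reduce the corollary to Proposition~\ref{P:uniquenessexample} by a scaling argument. The key preliminary observation is that the sm-uniqueness property of a set $S$ is invariant under independently rescaling each vector of $S$ by a nonzero scalar: indeed $(cx)^\top A(cx)=c^2 x^\top Ax$, so $x^\top Ax=0$ if and only if $(cx)^\top A(cx)=0$ for any $c\neq 0$. Hence it suffices to show that, after suitable rescalings, three pairwise non-parallel vectors in $\mathbb{R}^2$ can be put in the form $\{v_1,\,v_1+v_2,\,v_2\}$ for some linearly independent $v_1,v_2$, since this set differs from the one in Proposition~\ref{P:uniquenessexample} only by scaling two of its entries.

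Given three pairwise non-parallel vectors $w_1,w_2,w_3\in\mathbb{R}^2$, the vectors $w_1,w_2$ are linearly independent and therefore form a basis; write $w_3=\alpha w_1+\beta w_2$. Since $w_3$ is not a multiple of $w_2$ we have $\alpha\neq 0$, and likewise $\beta\neq 0$. Setting $v_1:=\alpha w_1$ and $v_2:=\beta w_2$ gives $w_3=v_1+v_2$, so after the legitimate rescalings $w_1\mapsto v_1$ and $w_2\mapsto v_2$ the set becomes $\{v_1,\,v_1+v_2,\,v_2\}$. One further rescaling by $2$ on the first and third entries produces $\{v_1+v_1,\,v_1+v_2,\,v_2+v_2\}$, which is exactly the set delivered by Proposition~\ref{P:uniquenessexample} applied to the basis $(v_1,v_2)$. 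That proposition gives sm-uniqueness, and the invariance observation transfers the conclusion back to $\{w_1,w_2,w_3\}$.

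The main obstacle is purely cosmetic rather than mathematical: one has to keep the scaling bookkeeping straight. At a conceptual level this is essentially a dimension count, since a real symmetric $2\times 2$ matrix has three free parameters, each equation $w_j^\top Aw_j=0$ imposes one linear condition on these parameters, and the pairwise non-parallelism of $w_1,w_2,w_3$ is what guarantees linear independence of the three conditions.

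As a backup route, one could argue intrinsically without invoking Proposition~\ref{P:uniquenessexample}: the real quadratic form $q(x):=x^\top Ax$ on $\mathbb{R}^2$ is either identically zero or has zero set equal to at most two lines through the origin (the definite, semidefinite-but-nonzero, and indefinite cases produce zero, one, and two such lines respectively). Three pairwise non-parallel vectors span three distinct lines through the origin, so $q$ vanishing on all three forces $q\equiv 0$, i.e.\ $A=0$. I slightly prefer the first route since it makes the structural link to the preceding proposition transparent, but either argument is short.
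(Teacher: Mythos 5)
Your proof is correct and takes essentially the same route as the paper's: rescale $v_1,v_2$ by suitable nonzero multiples so that $v_3=v_1+v_2$ and invoke Proposition~\ref{P:uniquenessexample}. Your explicit justification of the scaling invariance (including the factor of $2$ needed to match the set $\{2v_1,\,v_1+v_2,\,2v_2\}$ actually delivered by that proposition) fills in details the paper leaves implicit, but the argument is the same.
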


\begin{proof}[\textbf{\upshape Proof:}]
Clearly $v_1,v_2$ are linearly independent. Also, replacing them by suitable non-zero multiples of themselves,
we can suppose that $v_3=v_1+v_2$. The result therefore follows from Proposition~\ref{P:uniquenessexample}.
\end{proof}

In higher dimensions, criteria for sm-uniqueness
sets are more complicated, though we shall derive some in
Proposition~\ref{P:criterion} and Corollary~\ref{C:criterion} below.
However, for statistical applications, it suffices merely to have some concrete  examples of sm-uniqueness sets. The next result furnishes a  particularly simple example.
\begin{corollary}\label{C:concrete}
Let $S$ be the set consisting of those vectors in $\mathbb{R}^d$ with either one or two coordinates equal to $1$ and all the other coordinates equal to $0$.
Then $S$ is an sm-uniqueness set.
\end{corollary}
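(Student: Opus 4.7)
The plan is to invoke Proposition~\ref{P:uniquenessexample} with the choice $v_i=e_i$, where $e_1,\dots,e_d$ is the standard basis of $\mathbb{R}^d$. These are manifestly linearly independent, so Proposition~\ref{P:uniquenessexample} tells us that the set $\{e_j+e_k:1\le j\le k\le d\}$ is an sm-uniqueness set. This is very nearly the set $S$ in the statement, but with one discrepancy: the proposition's set contains the vectors $2e_j$ (corresponding to $j=k$), whereas $S$ contains $e_j$ in their place.

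The discrepancy is harmless because the condition $x^\top Ax=0$ is homogeneous of degree two in $x$: it holds for $e_j$ if and only if it holds for $2e_j$. Therefore, the set of symmetric matrices $A$ annihilated by $S$ coincides with the set of symmetric matrices annihilated by $\{e_j+e_k:1\le j\le k\le d\}$, which is $\{0\}$.

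If preferred, one can also give a short direct argument without appealing to Proposition~\ref{P:uniquenessexample}: let $A=(A_{jk})$ be a symmetric matrix with $x^\top Ax=0$ for all $x\in S$. Taking $x=e_i$ yields $A_{ii}=0$ for every $i$, and taking $x=e_i+e_j$ with $i\ne j$ yields
\[
0=(e_i+e_j)^\top A(e_i+e_j)=A_{ii}+A_{jj}+2A_{ij}=2A_{ij},
\]
so $A_{ij}=0$ as well, and hence $A=0$. There is no real obstacle here; the only thing to be careful about is the mismatch between $e_j$ and $2e_j$, which is resolved by the homogeneity observation above.
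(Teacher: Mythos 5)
Your proposal is correct and follows essentially the same route as the paper: apply Proposition~\ref{P:uniquenessexample} with $v_j=e_j$ and observe that replacing $2e_j$ by $e_j$ is harmless by the homogeneity of $x\mapsto x^\top Ax$. The additional direct computation is a nice bonus but not needed.
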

\figurename~\ref{F:arrow} illustrates the set $S$ in dimensions $2$ and $3$.

\begin{figure}[H]
\begin{minipage}{0.4\linewidth}
\begin{center}
\begin{tikzpicture}[scale=0.7]
\draw[-][dashed] (4,0) -- (4,4);
\draw[-][dashed] (0,4) -- (4,4);
\draw[->][very thick] (0,0) -- (0,4);
\draw[->][very thick] (0,0) -- (4,0);
\draw[->][very thick] (0,0) -- (4,4);
\draw (4,0) node[below]{$(1,0)$};
\draw (0,4) node[left]{$(0,1)$};
\draw (4,4) node[right]{$(1,1)$};
\end{tikzpicture}
\end{center}
\end{minipage}
\begin{minipage}{0.5\linewidth}
\begin{center}
\begin{tikzpicture}[scale=0.6]
\draw[-][dashed] (4,0) -- (4,4);
\draw[-][dashed] (0,4) -- (4,4);
\draw[-][dashed] (0,4) -- (2,5);
\draw[-][dashed] (4,4) -- (6,5);
\draw[-][dashed] (2,5) -- (6,5);
\draw[-][dotted] (2,1) -- (2,5);
\draw[-][dotted] (2,1) -- (6,1);
\draw[-][dashed] (4,0) -- (6,1);
\draw[-][dashed] (6,1) -- (6,5);
\draw[->][very thick] (0,0) -- (0,4);
\draw[->][very thick] (0,0) -- (4,0);
\draw[->][very thick] (0,0) -- (4,4);
\draw[->][very thick] (0,0) -- (2,1);
\draw[->][very thick] (0,0) -- (2,5);
\draw[->][very thick] (0,0) -- (6,1);
\draw (4,0) node[below]{$(1,0,0)$};
\draw (0,4) node[left]{$(0,0,1)$};
\draw (4,4) node[right]{$(1,0,1)$};
\draw (2,1) node[right]{$(0,1,0)$};
\draw (6,1) node[right]{$(1,1,0)$};
\draw (2,5) node[left]{$(0,1,1)$};
\end{tikzpicture}
\end{center}
\end{minipage}
\caption{Visualization of the vectors of the set $S$ from Corollary~\ref{C:concrete}  in dimensions  two and three.}
\label{F:arrow}
\end{figure}
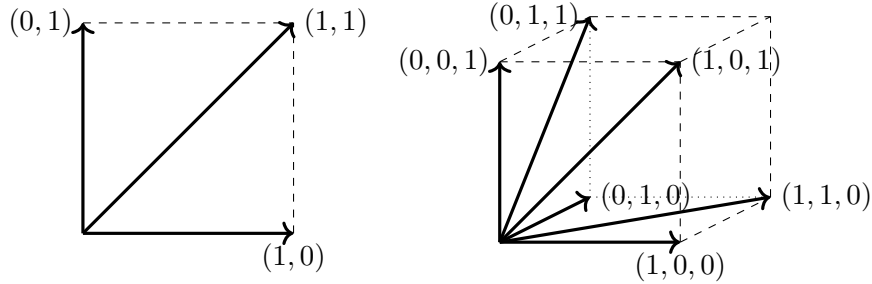

\begin{proof}[\textbf{\upshape Proof of Corollary~\ref{C:concrete}:}]
In Proposition~\ref{P:uniquenessexample},
take $v_j=e_j$, where $\{e_1,\dots,e_d\}$ is the standard unit vector basis of $\mathbb{R}^d$. By that result, the union of the two sets $\{e_j+e_k:1\le j<k\le d\}$ and $\{2e_j:1\le j\le d\}$ is an sm-uniqueness set. It clearly makes no difference if,
in the second set, we replace $2e_j$ by $e_j$. 
\end{proof}

The sets $S$ in Proposition~\ref{P:uniquenessexample}
and Corollary~\ref{C:concrete}
contain $(d^2+d)/2$ elements.
The next result shows that this number is minimal.

\begin{proposition} 
An sm-uniqueness set in $\mathbb{R}^d$ contains at least $(d^2+d)/2$ elements.	
\end{proposition}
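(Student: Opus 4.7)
The plan is to view the defining condition through the lens of linear algebra on the space of symmetric matrices. Let $V := \sym_d(\mathbb{R})$ denote the real vector space of symmetric $d \times d$ matrices; a standard dimension count (diagonal entries plus entries strictly above the diagonal) gives $\dim V = (d^2+d)/2$.

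For each $x \in \mathbb{R}^d$, the map $\ell_x : V \to \mathbb{R}$ given by $\ell_x(A) := x^\top A x$ is a linear functional on $V$, i.e., an element of the dual space $V^*$. Reformulated in this language, the statement that $S$ is an sm-uniqueness set is precisely that the common kernel $\bigcap_{x \in S} \ker(\ell_x)$ is $\{0\}$.

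The key observation is then the standard duality fact: for a finite collection of linear functionals on a finite-dimensional vector space $V$, the common kernel equals $\{0\}$ if and only if those functionals span $V^*$. Applying this, if $S$ is an sm-uniqueness set, then $\{\ell_x : x \in S\}$ must span $V^*$, and a spanning set of any vector space has cardinality at least the dimension of that space. Since $\dim V^* = \dim V = (d^2+d)/2$, we conclude $|S| \ge (d^2+d)/2$.

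No serious obstacle is anticipated; the only thing to be careful about is ensuring that the formulation in terms of functionals on $\sym_d(\mathbb{R})$ (rather than on all matrices, or on the larger space $\mathbb{R}^{d \times d}$) is made explicitly, since the quadratic form $x \mapsto x^\top A x$ depends only on the symmetric part of $A$, and it is precisely the dimension $(d^2+d)/2$ of $\sym_d(\mathbb{R})$ that appears in the bound. If $S$ were infinite, the same argument applies verbatim by extracting a finite spanning subfamily from $\{\ell_x : x \in S\}$ (or just noting that a spanning set in a $(d^2+d)/2$-dimensional space must have at least that many elements regardless of cardinality).
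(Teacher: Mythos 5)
Your proof is correct and is essentially the same dimension count as the paper's: the paper applies rank--nullity to the evaluation map $A\mapsto(v_1^\top Av_1,\dots,v_m^\top Av_m)$ on the $(d^2+d)/2$-dimensional space of symmetric matrices, while you phrase the identical fact dually as ``the functionals $\ell_x$ must span $V^*$.'' No substantive difference.
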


\begin{proof}[\textbf{\upshape Proof:}]
Let $v_1,\dots,v_m\in\mathbb{R}^d$
with $m<(d^2+d)/2$.  The vector space $V$ of all $d\times d$ symmetric matrices has dimension $(d^2+d)/2$, so the linear map $A\mapsto (v_1^\top Av_1,\dots,v_m^\top Av_m):V\to\mathbb{R}^m$
must have a non-zero kernel. Thus there exists $A\ne0$
such that $v_j^\top Av_j=0$ for $j\in\{1,\dots,m\}$. Hence $\{v_1,\dots,v_m\}$ is not an sm-uniqueness set.
\end{proof}

\begin{proposition}\label{P:span}
If $S$ is an sm-uniqueness set in $\mathbb{R}^d$, then $S$  spans $\mathbb{R}^d$. 
\end{proposition}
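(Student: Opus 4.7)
The natural approach is the contrapositive: assume $S$ does not span $\mathbb{R}^d$ and exhibit a non-zero symmetric matrix $A$ that annihilates every $x \in S$ in the quadratic-form sense, thereby showing $S$ is not an sm-uniqueness set.

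If $\mathrm{span}(S)$ is a proper subspace of $\mathbb{R}^d$, then its orthogonal complement is non-trivial, so there is a non-zero vector $w \in \mathbb{R}^d$ with $w \perp S$, i.e.\ $w^\top x = 0$ for every $x \in S$. The key observation is that the rank-one matrix $A := w w^\top$ is symmetric, non-zero (since $w\ne 0$), and satisfies
\[
x^\top A x = (w^\top x)^2 = 0 \quad \text{for all } x \in S.
\]
This directly contradicts the sm-uniqueness property of $S$, so $S$ must span $\mathbb{R}^d$.

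There is no real obstacle here; the whole argument is a one-line construction of the witness matrix $ww^\top$. The only thing worth emphasising in the write-up is why $A$ is symmetric (it is obviously its own transpose) and why it is non-zero (because $w$ is non-zero, so for instance $w^\top A w = \|w\|^4 \ne 0$).
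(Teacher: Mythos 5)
Your proof is correct and is essentially the same as the paper's: both argue by contrapositive, take a nonzero vector $w$ orthogonal to $\operatorname{span}(S)$, and exhibit the rank-one witness $A=ww^\top$ (the paper writes this same matrix as $R^\top\operatorname{diag}(1,0,\dots,0)R$ with $Rw=e_1$, which equals $ww^\top$ for unit $w$). Your direct formulation $x^\top A x=(w^\top x)^2$ is, if anything, a cleaner way to present the identical idea.
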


\begin{proof}[\textbf{\upshape Proof:}]
Suppose, on the contrary, that $S$ does not span $\mathbb{R}^d$.
Then we can find a unit vector $v\in S^\perp$.
Let $R$ be a $d\times d$ rotation matrix such that $Rv=e_1$,
 and let $A$ be the $d\times d$ symmetric matrix given by 
 $A:=R^\top DR$, where $D:=\diag(1,0,\dots,0)$.
If $x\in S$, then $x\perp v$, so $Rx\perp Rv=e_1$, so 
$Rx\in\spn\{e_2,\dots,e_d\}$ and hence $(Rx)^\top D(Rx)=0$.
In other words, $x^\top Ax=0$ for all $x\in S$.
On the other hand, $v^\top Av=(Rv)^\top DRv=e_1^\top De_1=1$, so $A\ne0$.
We conclude that $S$ is not an sm-uniqueness set.
\end{proof}

The next result is not really needed in what follows,
but it provides a necessary and sufficient condition for a 
set of  vectors in $\mathbb{R}^d$ to form an sm-uniqueness set.

\begin{proposition}\label{P:criterion}
Given a vector $x\in\mathbb{R}^d$, say $x=(x_1,\dots,x_d)$,
let $\hat{x}$ be the upper-triangular $d\times d$  matrix with entries
$\hat{x}_{ij}:=x_ix_j~( 1\le i\le j\le d)$.
A set $S\subset\mathbb{R}^d$  is an sm-uniqueness
set  if and only if 
$\{\hat{x}:x\in S\}$
spans  the  space of upper-triangular $d\times d$ matrices.
\end{proposition}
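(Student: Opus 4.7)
The plan is to reformulate the defining condition of an sm-uniqueness set as a non-degeneracy statement for a bilinear pairing between the $(d^2+d)/2$-dimensional space $V$ of symmetric $d\times d$ matrices and the $(d^2+d)/2$-dimensional space $U$ of upper-triangular $d\times d$ matrices, and then apply a standard duality argument.

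First, I would write out, for any symmetric $A\in V$ and any $x\in\mathbb{R}^d$, the expansion
\[
x^\top A x=\sum_{i=1}^d A_{ii}x_i^2+2\sum_{1\le i<j\le d} A_{ij}x_ix_j.
\]
This motivates the linear map $\Phi:V\to U$ defined by $\Phi(A)_{ii}:=A_{ii}$ and $\Phi(A)_{ij}:=2A_{ij}$ for $i<j$. Clearly $\Phi$ is a linear isomorphism of $V$ onto $U$. Defining the inner product on $U$ by $\langle B,C\rangle:=\sum_{i\le j}B_{ij}C_{ij}$, the expansion above then reads
\[
x^\top Ax=\langle\Phi(A),\hat{x}\rangle,\qquad A\in V,\ x\in\mathbb{R}^d.
\]

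With this identity in hand, the conclusion is essentially immediate. The set $S$ fails to be an sm-uniqueness set iff there exists a non-zero $A\in V$ with $x^\top A x=0$ for all $x\in S$, iff there exists a non-zero $B\in U$ (namely $B=\Phi(A)$) such that $\langle B,\hat{x}\rangle=0$ for every $x\in S$, iff the family $\{\hat{x}:x\in S\}$ is contained in a proper subspace of $U$, iff $\{\hat{x}:x\in S\}$ fails to span $U$. Taking the contrapositive gives the stated equivalence.

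There is no real obstacle here: the proof is a routine exercise in linear algebra once one has noticed the right duality. The only point deserving care is bookkeeping with the factor of $2$ on the off-diagonal entries when passing between $V$ and $U$, which is exactly what the definition of $\Phi$ is arranged to absorb so that the pairing $x^\top Ax=\langle\Phi(A),\hat{x}\rangle$ comes out cleanly.
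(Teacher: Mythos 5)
Your proof is correct and follows essentially the same route as the paper's: both reduce the statement to a duality between the space of symmetric matrices and the span of the vectors $\hat{x}$, the paper phrasing this via the symmetric tensor square $\sym^2(\mathbb{R}^d)$ and its basis $E_{ij}$, and you via the explicit isomorphism $\Phi$ onto upper-triangular matrices paired with an inner product. The factor-of-$2$ bookkeeping on the off-diagonal entries is handled correctly, so nothing is missing.
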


\begin{proof}[\textbf{\upshape Sketch of Proof:}]
Denote by $\sym^2(\mathbb{R}^d)$ the symmetric tensor product of $\mathbb{R}^d$ with itself.
For each symmetric $d\times d$ matrix $A$, 
there is a unique linear functional $\tilde{A}:\sym^2(\mathbb{R}^d)\to\mathbb{R}$ such that $\tilde{A}(x\otimes x)=x^\top Ax$ for all $x\in\mathbb{R}^d$.
Thus a set $S\subset \mathbb{R}^d$ is an sm-uniqueness
set  iff $\{x\otimes x:x\in S\}$
spans $\sym^2(\mathbb{R}^d)$.

Let $e_1,\dots,e_d$ be the standard unit basis of $\mathbb{R}^d$.
Then $\sym^2(\mathbb{R}^d)$ has a basis $\{E_{ij}:1\le i\le j\le d\}$,
where $E_{ij}:=(e_i\otimes e_j+e_j\otimes e_i)/2$. 
Expressing $x\otimes x$ in terms of this basis, we have
\[
x\otimes x=\sum_{1\le i\le j\le d}x_ix_j E_{ij}.
\]
The result follows easily from this.
\end{proof}

Since the space of upper-triangular $d\times d$ matrices has dimension $(d^2+d)/2$,
we deduce the following corollary.

\begin{corollary}\label{C:criterion}
Let $S$ be a set of $(d^2+d)/2$ vectors in $\mathbb{R}^d$.
For each $x\in S$, define $\hat{x}$ as in
Proposition~\ref{P:criterion}.
Then $S$ is an sm-uniqueness set  if and only if
the matrices $\{\hat{x}:x\in S\}$ are linearly independent.
\end{corollary}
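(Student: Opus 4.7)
The plan is to derive this corollary directly from Proposition~\ref{P:criterion} by a dimension count, with no genuine additional content beyond a standard fact from linear algebra.

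First I would recall Proposition~\ref{P:criterion}, which states that $S\subset\mathbb{R}^d$ is an sm-uniqueness set if and only if $\{\hat{x}:x\in S\}$ spans the space $U$ of upper-triangular $d\times d$ matrices. Next I would observe that $U$ has dimension $(d^2+d)/2$: a basis is given by the matrices $E_{ij}$ with a $1$ in position $(i,j)$ for $1\le i\le j\le d$ and zeros elsewhere, so counting these basis elements gives $1+2+\cdots+d=(d^2+d)/2$.

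Now I would invoke the hypothesis $|S|=(d^2+d)/2$, which matches $\dim U$ exactly. Then the desired equivalence is immediate from the standard linear algebra fact: in a finite-dimensional vector space $V$ of dimension $n$, a set of exactly $n$ vectors spans $V$ if and only if the vectors are linearly independent. Applying this with $V=U$ and the vectors $\{\hat{x}:x\in S\}$ gives that spanning is equivalent to linear independence, and combining with Proposition~\ref{P:criterion} yields the corollary.

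Since the argument is purely a dimension-count packaging of the previous proposition, there is no substantive obstacle; the only thing to be careful about is that one is counting the dimension of the upper-triangular matrix space (not of the symmetric matrices, though these are naturally isomorphic and also of dimension $(d^2+d)/2$), and that the matching of cardinality is what triggers the equivalence between the spanning criterion of Proposition~\ref{P:criterion} and the linear-independence criterion sought here.
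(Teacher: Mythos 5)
Your proof is correct and is essentially the same as the paper's: the paper derives the corollary from Proposition~\ref{P:criterion} by exactly this dimension count, noting that the space of upper-triangular $d\times d$ matrices has dimension $(d^2+d)/2$ so that a family of that many elements spans if and only if it is linearly independent.
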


\begin{remark}
If we view an upper-triangular $d\times d$ matrix as a column vector of length $D:=(d^2+d)/2$,
then Corollary~\ref{C:criterion} becomes a criterion expressed in terms of the linear independence of $D$ vectors in $\mathbb{R}^D$, which can in turn be reformulated as the non-vanishing of a $D\times D$ determinant. This furnishes a
systematic method of determining whether or not a given set of $(d^2+d)/2$ vectors in $\mathbb{R}^d$ is an sm-uniqueness set.
\end{remark}


\section{Proof of Theorem~\ref{T:CWelliptical}}\label{S:proof1}

We break the proof into a series of lemmas. 
The first of these is fairly standard.

\begin{lemma}\label{L:cf}
Let $P,Q$ be Borel probability measures on $\mathbb{R}^d$
and let $L$ be a line in $\mathbb{R}^d$. Then $P_{L}=Q_{L}$
if and only if the characteristic functions of $P,Q$ satisfy
 $\phi_P(\xi)=\phi_Q(\xi)$ for all $\xi\in L$.
\end{lemma}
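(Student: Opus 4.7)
The plan is to reduce the lemma to the one-dimensional uniqueness theorem for characteristic functions (Lévy's theorem) applied to the line $L$, identified with $\mathbb{R}$ via a choice of unit vector $u$ spanning $L$. The bulk of the work is the (essentially trivial) computation of the characteristic function of $P_L$ in terms of that of $P$.

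The key observation I would establish first is that, for every $\xi\in L$ and every $x\in\mathbb{R}^d$,
\[
\xi\cdot\pi_L(x)=\xi\cdot x.
\]
This is immediate from the defining property of orthogonal projection: $x-\pi_L(x)\perp L$, so $\xi\cdot(x-\pi_L(x))=0$ whenever $\xi\in L$. Combining this identity with the change-of-variables (pushforward) formula applied to $\pi_L$ gives, for $\xi\in L$,
\[
\phi_{P_L}(\xi)=\int_L e^{i\xi\cdot y}\,dP_L(y)=\int_{\mathbb{R}^d}e^{i\xi\cdot\pi_L(x)}\,dP(x)=\int_{\mathbb{R}^d}e^{i\xi\cdot x}\,dP(x)=\phi_P(\xi),
\]
and similarly $\phi_{Q_L}(\xi)=\phi_Q(\xi)$ on $L$. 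So $\phi_P=\phi_Q$ on $L$ iff $\phi_{P_L}=\phi_{Q_L}$ on $L$.

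Next I would invoke the one-dimensional uniqueness theorem: since $P_L$ and $Q_L$ are Borel probability measures on $L\cong\mathbb{R}$, they coincide iff their characteristic functions coincide on all of $L$. Chaining the two equivalences yields the lemma.

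I do not anticipate any real obstacle: the argument rests only on the elementary orthogonality property of the projection and on the classical one-dimensional uniqueness of characteristic functions. The only point that requires a moment of care is the identification of $L$ with $\mathbb{R}$, which must be consistent on both sides so that ``$\xi\in L$'' makes sense simultaneously as an argument of $\phi_P$ (viewed on $\mathbb{R}^d$) and of $\phi_{P_L}$ (viewed on the one-dimensional space $L$); this is precisely what the identity $\xi\cdot\pi_L(x)=\xi\cdot x$ makes transparent.
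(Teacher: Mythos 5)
Your proof is correct and follows essentially the same route as the paper: the key identity $\xi\cdot\pi_L(x)=\xi\cdot x$ for $\xi\in L$, the resulting computation showing $\phi_{P_L}=\phi_P$ on $L$, and the uniqueness theorem for characteristic functions on the one-dimensional space $L$. No issues.
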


\begin{proof}[\textbf{\upshape Proof:}]
Let $\xi\in L$. Then $x\cdot\xi=\pi_L(x)\cdot\xi$ for all $x\in\mathbb{R}^d$.
Consequently, 
\[
\phi_{P}(\xi)=\int_{\mathbb{R}^d}e^{ix\cdot\xi}\,dP(x)
=\int_{\mathbb{R}^d}e^{i\pi_L(x)\cdot\xi}\,dP(x)
=\int_{L}e^{i y\cdot\xi}\,dP_L(y),
\]
and similarly for $Q$. Hence, if $P_L=Q_L$, 
then $\phi_{P}(\xi)=\phi_{Q}(\xi)$. 

Conversely, if $\phi_P(\xi)=\phi_Q(\xi)$ for all $\xi\in L$,
then the calculation above shows that $P_L$ and $Q_L$ have the 
same characteristic function, and consequently $P_L=Q_L$.
\end{proof}

\begin{lemma}\label{L:mu1=mu2}
Let $P,Q$ be elliptical distributions, with characteristic functions
\[
\phi_P(\xi)=e^{i\mu_1\cdot\xi}\psi_1(\xi^\top \Sigma_1\xi)
\quad\text{and}\quad
\phi_Q(\xi)=e^{i\mu_2\cdot\xi}\psi_2(\xi^\top \Sigma_2\xi),
\quad \xi\in\mathbb{R}^d.
\]
If  $P_{\langle x\rangle}=Q_{\langle x\rangle}$ for all $x$ in some spanning set of $\mathbb{R}^d$,
then $\mu_1=\mu_2$.
\end{lemma}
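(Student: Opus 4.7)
My plan is to convert the hypothesis, via Lemma~\ref{L:cf}, into the pointwise identity
\[
e^{it\mu_1\cdot x}\psi_1(t^2 x^\top\Sigma_1 x) = e^{it\mu_2\cdot x}\psi_2(t^2 x^\top\Sigma_2 x) \qquad (t\in\mathbb{R}),
\]
which holds for every $x$ in the given spanning set, simply because an arbitrary point of $\langle x\rangle$ has the form $tx$ and $(tx)^\top\Sigma_j(tx)=t^2 x^\top\Sigma_j x$.

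The key preliminary observation I would make is that each $\psi_j$ is real-valued on the image of $\xi\mapsto\xi^\top\Sigma_j\xi$. This follows by substituting into the Hermitian-symmetry relation $\phi_{P}(-\xi)=\overline{\phi_P(\xi)}$, which after cancelling the $e^{\pm i\mu_j\cdot\xi}$ factors yields $\psi_j(\xi^\top\Sigma_j\xi)=\overline{\psi_j(\xi^\top\Sigma_j\xi)}$ for every $\xi$. Combined with continuity of $\psi_j$ and the normalization $\psi_j(0)=1$ (forced by $\phi_{P}(0)=1$), this gives, for each fixed $x$, a $\delta>0$ so that both factors $\psi_j(t^2 x^\top\Sigma_j x)$ are strictly positive on $|t|<\delta$.

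The rest is routine. Taking absolute values in the displayed identity forces the two positive reals $\psi_1(t^2 x^\top\Sigma_1 x)$ and $\psi_2(t^2 x^\top\Sigma_2 x)$ to agree, and dividing them out leaves $e^{it(\mu_1-\mu_2)\cdot x}=1$ for all $|t|<\delta$, hence $(\mu_1-\mu_2)\cdot x=0$. Since $x$ ranges over a spanning set of $\mathbb{R}^d$, the vector $\mu_1-\mu_2$ is orthogonal to everything, so $\mu_1=\mu_2$. I do not anticipate any real obstacle; the only place where one must be mildly careful is establishing the reality of $\psi_j$ on the relevant domain, after which the argument is a short manipulation.
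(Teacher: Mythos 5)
Your proof is correct, and it follows the same overall skeleton as the paper's: pass to the identity $e^{it\mu_1\cdot x}\psi_1(t^2x^\top\Sigma_1x)=e^{it\mu_2\cdot x}\psi_2(t^2x^\top\Sigma_2x)$ via Lemma~\ref{L:cf}, use $\psi_j(0)=1$ and continuity to work on a small interval $|t|<\delta$, cancel the $\psi$ factors to get $e^{it(\mu_1-\mu_2)\cdot x}=1$, and finish with the spanning hypothesis. The one step where you diverge is the cancellation itself. The paper exploits the evenness of $t\mapsto t^2x^\top\Sigma_jx$: dividing the identity at $t$ by the identity at $-t$ makes the $\psi$ factors drop out with no information about $\psi_j$ beyond non-vanishing near $0$. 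You instead first prove that each $\psi_j$ is real on the range of $\xi\mapsto\xi^\top\Sigma_j\xi$ (correctly, via the Hermitian symmetry $\phi_P(-\xi)=\overline{\phi_P(\xi)}$, noting that the quadratic form is even), hence positive near $0$, and then take moduli. Both are valid; the paper's $t\mapsto-t$ trick is slightly more economical in that it needs no reality claim, while your route isolates a reusable structural fact about elliptical generators (real-valuedness of $\psi$ on the relevant domain) at the cost of one extra observation. Your final step, deducing $(\mu_1-\mu_2)\cdot x=0$ from $e^{it(\mu_1-\mu_2)\cdot x}=1$ on $|t|<\delta$, is the same as the paper's (differentiate at $t=0$, or argue by continuity); no gap there.
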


\begin{proof}[\textbf{\upshape Proof:}]
By Lemma~\ref{L:cf}, if $P_{\langle x\rangle}=Q_{\langle x\rangle}$,
then $\phi_P=\phi_Q$ on $\langle x\rangle$,
in other words $\phi_P(tx)=\phi_Q(tx)$ for all $t\in\mathbb{R}$.
Recalling the form of $\phi_P,\phi_Q$, we obtain
\begin{equation}\label{E:basiceqn}
e^{it\mu_1\cdot x}\psi_1(t^2x^\top \Sigma_1 x)=
e^{it\mu_2\cdot x}\psi_2(t^2 x^\top \Sigma_2x),
\quad t\in\mathbb{R}.
\end{equation}
Since $\phi_P,\phi_Q$ are characteristic functions,
they are both  equal to $1$ at the origin.
It follows that $\psi_1(0)=\psi_2(0)=1$. 
As $\psi_1,\psi_2$ are continuous, there exists $\delta>0$
such that both $\psi_1(t^2x^\top \Sigma_1x)\ne0$ and  $\psi_2(t^2x^\top\Sigma_2x)\ne0$ for all $t\in[-\delta,\delta]$.
For each such $t$, we may divide equation \eqref{E:basiceqn}
by the same equation in which $t$ is replaced by $-t$. This gives
\[
e^{2it\mu_1\cdot x}=e^{2it\mu_2\cdot x}, \quad t\in[-\delta,\delta].
\]
Differentiating with respect to $t$ and setting $t=0$,
we obtain $\mu_1\cdot x=\mu_2\cdot x$.
Finally, if this holds for all $x$ in a spanning set of $\mathbb{R}^d$,
then $\mu_1=\mu_2$.
\end{proof}

\begin{lemma}\label{L:samepsi}
Let $P,Q$ be centred elliptical distributions on $\mathbb{R}^d$
such that $P_{\langle x\rangle}=Q_{\langle x\rangle}$
for all $x$ in some spanning set of $\mathbb{R}^d$. 
Then there exists a continuous function $\psi:[0,\infty)\to\mathbb{C}$
and positive semi-definite $d\times d$ matrices 
$\Sigma_1,\Sigma_2$ such that
\begin{equation}\label{E:samepsi}
\phi_P(\xi)=\psi(\xi^\top\Sigma_1\xi),
\quad
\phi_Q(\xi)=\psi(\xi^\top\Sigma_2\xi),
\quad \xi\in\mathbb{R}^d.
\end{equation}
\end{lemma}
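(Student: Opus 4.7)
The plan is to exploit the functional equation that arises from Lemma~\ref{L:cf}, and then absorb the mismatch between $\psi_1$ and $\psi_2$ into a rescaling of $\Sigma_2$.

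First I would apply Lemma~\ref{L:cf} at each $x$ in the spanning set $S$: since $P_{\langle x\rangle}=Q_{\langle x\rangle}$, evaluating $\phi_P=\phi_Q$ along $tx$ for $t\in\mathbb{R}$, and using that both measures are centred, yields
\[
\psi_1\bigl(t^2 a(x)\bigr)=\psi_2\bigl(t^2 b(x)\bigr),\quad t\in\mathbb{R},
\]
where $a(x):=x^\top\Sigma_1 x\ge 0$ and $b(x):=x^\top\Sigma_2 x\ge 0$.

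Second, I would produce some $x_0\in S$ with both $a(x_0)>0$ and $b(x_0)>0$. If no such $x_0$ exists, one of the following degenerate situations must occur. If there is $x\in S$ with $a(x)=0$ and $b(x)>0$, then $1=\psi_1(0)=\psi_2(t^2 b(x))$ for all $t$, so $\psi_2\equiv 1$ on $[0,\infty)$, forcing $\phi_Q\equiv 1$ and $Q=\delta_0$; the hypothesis then gives $\phi_P\equiv 1$ along every $\langle x\rangle$ with $x\in S$, and running the same reasoning back (using that $S$ spans and $\Sigma_1$ is positive semi-definite, so $x^\top\Sigma_1 x=0$ for all $x\in S$ implies $\Sigma_1=0$) forces $P=\delta_0$ as well. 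Symmetrically if $a(x)>0$ and $b(x)=0$ for some $x$. The only remaining option is $a(x)=b(x)=0$ for every $x\in S$, which by the spanning property forces $\Sigma_1=\Sigma_2=0$ and $P=Q=\delta_0$. In each of these degenerate sub-cases, taking $\psi\equiv 1$ and $\Sigma_1'=\Sigma_2'=0$ (or any PSD matrices) finishes the lemma.

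Third, assuming a good $x_0$ has been produced, set $c:=b(x_0)/a(x_0)>0$. Substituting $s=t^2 a(x_0)$ and letting $t$ range over $\mathbb{R}$, so that $s$ ranges over all of $[0,\infty)$, the functional equation specializes to
\[
\psi_1(s)=\psi_2(cs),\quad s\ge 0,
\]
equivalently $\psi_2(u)=\psi_1(u/c)$ on $[0,\infty)$. Then I define $\psi:=\psi_1$, keep $\Sigma_1$, and replace $\Sigma_2$ by $\Sigma_2':=\Sigma_2/c$, which remains positive semi-definite because $c>0$. A direct check gives $\phi_P(\xi)=\psi(\xi^\top\Sigma_1\xi)$ and $\phi_Q(\xi)=\psi_2(\xi^\top\Sigma_2\xi)=\psi_1(\xi^\top\Sigma_2\xi/c)=\psi(\xi^\top\Sigma_2'\xi)$, as required.

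The main obstacle is the bookkeeping in step two: I have to rule out, using only the spanning hypothesis plus positive semi-definiteness, the scenario in which every $x\in S$ lies in $\ker\Sigma_1\cup\ker\Sigma_2$ without collapsing cleanly. The clean resolution is that whenever the "mixed" case $(a(x)=0,\ b(x)>0)$ or $(a(x)>0,\ b(x)=0)$ occurs for some $x\in S$, one of $\psi_1,\psi_2$ is forced to be identically $1$, which then propagates to the other distribution via the hypothesis and the fact that $\{x:\Sigma x=0\}$ is a subspace, so spanning forces it to be all of $\mathbb{R}^d$ when it contains $S$.
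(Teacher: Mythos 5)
Your proof is correct and follows essentially the same route as the paper: apply Lemma~\ref{L:cf} to get $\psi_1(t^2 x^\top\Sigma_1 x)=\psi_2(t^2 x^\top\Sigma_2 x)$, pick $x_0$ in the spanning set where a quadratic form is strictly positive, deduce $\psi_1(s)=\psi_2(cs)$ on $[0,\infty)$, and absorb the constant into one of the matrices. The one difference is that the paper demands only $x_0^\top\Sigma_1 x_0>0$ (not both forms positive), since the ratio $a=(x_0^\top\Sigma_2 x_0)/(x_0^\top\Sigma_1 x_0)\ge 0$ is allowed to vanish and one can still write $\phi_P(\xi)=\psi_2(\xi^\top(a\Sigma_1)\xi)$; this collapses your entire second step of degenerate sub-cases into the single trivial case $\Sigma_1=\Sigma_2=0$.
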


\begin{proof}[\textbf{\upshape Proof:}]
By assumption, there exist continuous functions 
$\psi_1,\psi_2:[0,\infty)\to\mathbb{C}$ and positive semi-definite
$d\times d$ matrices $\Sigma_1',\Sigma_2'$ such that
\[
\phi_P(\xi)=\psi_1(\xi^\top\Sigma_1'\xi),
\quad
\phi_Q(\xi)=\psi_2(\xi^\top\Sigma_2'\xi),
\quad \xi\in\mathbb{R}^d.
\]
The point of the lemma is to show that, adjusting $\Sigma_1',\Sigma_2'$ if necessary, we may take $\psi_1=\psi_2$.

If $\Sigma_1'=\Sigma_2'=0$, then  we may as well take $\psi_1=\psi_2\equiv1$. 

Otherwise, at least one of $\Sigma_1',
\Sigma_2'$ is non-zero, say $\Sigma_1'\ne0$.
Since $\Sigma_1'$ is positive semi-definite, any spanning
set of $\mathbb{R}^d$ contains a vector $x$ such that $x^\top\Sigma_1'x>0$.
Hence, there exists $x_0\in\mathbb{R}^d$ such that both $P_{\langle x_0\rangle}=Q_{\langle x_0\rangle}$ and $x_0^\top\Sigma_1'x_0>0$.
By Lemma~\ref{L:cf},
\[
\psi_1(t^2x_0^\top\Sigma_1' x_0)= \psi_2(t^2 x_0^\top\Sigma_2'x_0),
\quad t\in\mathbb{R}.
\]
It follows that
\[
\psi_1(s)=\psi_2(as), \quad s\ge0,
\]
where $a:=(x_0^\top\Sigma_2'x_0)/(x_0^\top\Sigma_1'x_0)\ge0$. 
Feeding this information into the formula for $\phi_P$, we obtain
\[
\phi_P(\xi)=\psi_2(\xi^\top(a\Sigma_1')\xi),
\quad \xi\in\mathbb{R}^d.
\]
Thus \eqref{E:samepsi} holds with $\psi:=\psi_2$ and $\Sigma_1:=a\Sigma_1'$ and $\Sigma_2:=\Sigma_2'$.
\end{proof}

\begin{lemma}\label{L:end}
Let $P,Q$ be probability measures on $\mathbb{R}^d$ with characteristic functions given by \eqref{E:samepsi}.
Let $S$ be an sm-uniqueness set,
and suppose that $P_{\langle x\rangle}=Q_{\langle x\rangle}$ for all $x\in S$. Then $P=Q$.
\end{lemma}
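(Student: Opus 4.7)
The plan is to convert the projection equalities into a functional equation constraining $\psi$, $\Sigma_1$ and $\Sigma_2$, and then invoke the sm-uniqueness of $S$ to deduce $\Sigma_1 = \Sigma_2$. By Lemma~\ref{L:cf} applied to $L = \langle x \rangle$, together with the forms of $\phi_P$ and $\phi_Q$ given in \eqref{E:samepsi}, the hypothesis is equivalent to
\[
\psi(t^2\, x^\top \Sigma_1 x) = \psi(t^2\, x^\top \Sigma_2 x) \quad\text{for all } t \in \mathbb{R} \text{ and } x \in S.
\]
Writing $a := x^\top \Sigma_1 x \ge 0$ and $b := x^\top \Sigma_2 x \ge 0$, this reads $\psi(as) = \psi(bs)$ for every $s \ge 0$.

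The crux is the following dichotomy: for each $x \in S$, either $a = b$, or $\psi \equiv 1$ on $[0,\infty)$. I would argue by contradiction. Suppose $a \ne b$ for some $x \in S$, and without loss of generality $a < b$, so $b > 0$. If $a = 0$, then $1 = \psi(0) = \psi(bs)$ for all $s \ge 0$, so $\psi \equiv 1$ on $[0,\infty)$. If $a > 0$, set $\lambda := b/a > 1$; the substitution $u = as$ rewrites the equation as $\psi(u) = \psi(\lambda u)$ for all $u \ge 0$. Replacing $u$ by $u/\lambda^n$ iteratively yields $\psi(u) = \psi(\lambda^{-n} u)$ for every $n \ge 1$; letting $n \to \infty$ and using continuity of $\psi$ at the origin together with $\psi(0) = 1$ (established in the proof of Lemma~\ref{L:mu1=mu2}) forces $\psi(u) = 1$ for every $u \ge 0$.

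In the degenerate branch of the dichotomy, $\psi \equiv 1$ gives $\phi_P \equiv 1 \equiv \phi_Q$, so $P = Q$ (both are $\delta_0$). In the other branch, $x^\top(\Sigma_1 - \Sigma_2) x = 0$ for every $x \in S$; since $\Sigma_1 - \Sigma_2$ is a real symmetric $d \times d$ matrix and $S$ is an sm-uniqueness set, the definition yields $\Sigma_1 = \Sigma_2$, whence $\phi_P = \phi_Q$ and $P = Q$.

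I expect the scaling step in the middle paragraph to be the main obstacle. Because no moment hypothesis is available, one cannot differentiate $\phi_P$ or $\phi_Q$ at the origin to read off the quadratic forms directly; the only tools at hand are the continuity of $\psi$ at $0$ and the functional equation $\psi(u) = \psi(\lambda u)$, and it is precisely the iteration of this equation back toward the origin that converts scaling invariance into the conclusion $\psi \equiv 1$. Once that dichotomy is in place, the rest is a direct application of the definition of sm-uniqueness.
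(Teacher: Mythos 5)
Your proposal is correct and follows essentially the same route as the paper: reduce the projection equalities via Lemma~\ref{L:cf} to the functional equation $\psi(as)=\psi(bs)$, iterate the rescaling toward the origin and use continuity of $\psi$ to show that a mismatch of the quadratic forms forces $\psi$ to be constant, and otherwise invoke sm-uniqueness of $S$ to get $\Sigma_1=\Sigma_2$. The only cosmetic differences are that you iterate with $\lambda>1$ downward (equivalently, the paper's $a=1/\lambda\in[0,1)$ upward) and you pin down the constant as $1$ (identifying $P=Q=\delta_0$), whereas the paper only needs that $\psi$ is constant.
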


\begin{proof}[\textbf{\upshape Proof:}]
We show that either $\Sigma_1=\Sigma_2$ or $\psi$ is constant.
Either way, this gives that $\phi_P=\phi_Q$ and hence that $P=Q$.

Suppose then that $\Sigma_1\ne\Sigma_2$.
As $S$ is an sm-uniqueness set,
there exists $x_0\in S$ such that 
$x_0^\top(\Sigma_1-\Sigma_2)x_0\ne0$.
Exchanging the roles of $P,Q$ if necessary, we can suppose that
\[
x_0^\top\Sigma_1x_0>x_0^\top\Sigma_2 x_0\ge0.
\]
Since $P_{\langle x_0\rangle}=Q_{\langle x_0\rangle}$, Lemma~\ref{L:cf} gives
\[
\psi(t^2x_0^\top\Sigma_1 x_0)=\psi(t^2x_0^\top\Sigma_2 x_0),
\quad t\in\mathbb{R}.
\]
It follows that
\[
\psi(s)=\psi(as), \quad s\ge0,
\]
where $a:=(x_0^\top\Sigma_2x_0)/(x_0^\top\Sigma_1 x_0)\in[0,1)$.
Iterating this equation, and using the continuity of $\psi$, we obtain
\[
\psi(s)=\psi(as)=\psi(a^2s)=\cdots=\psi(a^ns)\underset{n\to\infty}\longrightarrow \psi(0), \quad s\ge0.
\]
Therefore $\psi$ is constant, as claimed.
\end{proof}

\begin{proof}[\textbf{\upshape Completion of proof of Theorem~\ref{T:CWelliptical}:}]
Let $P,Q$ be elliptical distributions on $\mathbb{R}^d$, with characteristic
functions 
\[
\phi_P(\xi)=e^{i\mu_1\cdot\xi}\psi_1(\xi^\top\Sigma_1\xi)
\quad\text{and}\quad
\phi_Q(\xi)=e^{i\mu_2\cdot\xi}\psi_2(\xi^\top\Sigma_2\xi).
\]
Suppose that
$P_{\langle x\rangle}=Q_{\langle x\rangle}$ for all $x\in S$,
where $S\subset\mathbb{R}^d$ is an sm-uniqueness set.
Our goal is to prove that $P=Q$.

By Proposition~\ref{P:span}, $S$ is  a spanning set for $\mathbb{R}^d$.
By Lemma~\ref{L:mu1=mu2},  it follows that $\mu_1=\mu_2$. Translating both $P,Q$ by $-\mu_1$,
we may suppose that both measures are centred.
Lemma~\ref{L:samepsi}  then shows that $P,Q$ have characteristic functions given by \eqref{E:samepsi},
and finally Lemma~\ref{L:end} gives  $P=Q$.
\end{proof}


\section{Proof of Theorem~\ref{T:CWellipticalsharp}}\label{S:proof2}

Once again, we break the proof up into lemmas.
The first lemma characterizes those elliptical distributions on $\mathbb{R}^d$ 
that are non-degenerate 
(i.e., not supported on any hyperplane in $\mathbb{R}^d$).

\begin{lemma}\label{L:nondegen}
Let $P$ be an elliptical distribution on $\mathbb{R}^d$ with characteristic
function
\[
\phi_P(\xi)=e^{i\mu\cdot\xi}\psi(\xi^\top\Sigma\xi),
\quad \xi\in\mathbb{R}^d.
\]
Then $P$ is non-degenerate if and only if $\psi$ is non-constant and $\Sigma$ is strictly positive definite.
\end{lemma}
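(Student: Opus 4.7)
The plan is to prove the equivalence by treating each direction separately, in both cases arguing via the characteristic function restricted to a single direction. Throughout I would use that $\psi(0)=1$ (since $\phi_P(0)=1$).

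For the easier direction ($\Rightarrow$, by contrapositive), suppose first that $\psi$ is constant. Then $\psi\equiv 1$, so $\phi_P(\xi)=e^{i\mu\cdot\xi}$, which forces $P=\delta_\mu$, a measure supported at a single point and hence degenerate. Suppose instead that $\Sigma$ is not strictly positive definite. Since $\Sigma$ is positive semi-definite, there exists $v\ne 0$ with $v^\top\Sigma v=0$, and a standard square-root argument ($\Sigma=B^\top B$, so $|Bv|^2=0$) gives $\Sigma v=0$. Then for every $t\in\mathbb{R}$,
\[
\phi_P(tv)=e^{it\mu\cdot v}\psi(t^2 v^\top\Sigma v)=e^{it\mu\cdot v},
\]
which is the characteristic function of the Dirac mass at $\mu\cdot v$. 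Hence $v\cdot X=\mu\cdot v$ almost surely under $P$, so $P$ lives on the hyperplane $\{x:v\cdot x=\mu\cdot v\}$.

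For the other direction ($\Leftarrow$), assume $\psi$ is non-constant and $\Sigma$ is strictly positive definite, and suppose for contradiction that $P$ is supported on a hyperplane $\{x:v\cdot x=c\}$ for some unit vector $v$ and some $c\in\mathbb{R}$. Computing $\phi_P(tv)$ directly from the integral definition gives $\phi_P(tv)=e^{itc}$, while the elliptical form gives $\phi_P(tv)=e^{it\mu\cdot v}\psi(t^2 v^\top\Sigma v)$. Dividing the equation at $t$ by the equation at $-t$ (valid on a neighbourhood of $0$ where $\psi(t^2 v^\top\Sigma v)\ne 0$, by continuity of $\psi$ and $\psi(0)=1$) eliminates the $\psi$ factor and yields $e^{2it(c-\mu\cdot v)}=1$ for all $t$ in this neighbourhood, forcing $c=\mu\cdot v$. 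Feeding this back, $\psi(t^2 v^\top\Sigma v)=1$ for all $t\in\mathbb{R}$. Since $\Sigma$ is strictly positive definite, $v^\top\Sigma v>0$, so $\{t^2 v^\top\Sigma v:t\in\mathbb{R}\}=[0,\infty)$; hence $\psi\equiv 1$ on $[0,\infty)$, contradicting the assumption that $\psi$ is non-constant.

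I do not anticipate a significant obstacle; the one subtle point is justifying the cancellation of $\psi$ in the $t\leftrightarrow -t$ trick, which is handled cleanly by invoking continuity of $\psi$ together with $\psi(0)=1$ to get a neighbourhood where $\psi$ is non-zero (the same device used in Lemma~\ref{L:mu1=mu2}). Everything else amounts to recognising characteristic functions of Dirac masses and using the fact that for positive semi-definite $\Sigma$ the quadratic form $v^\top\Sigma v$ vanishes exactly on $\ker\Sigma$.
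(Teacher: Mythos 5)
Your proof is correct and follows essentially the same route as the paper: restrict $\phi_P$ to the line spanned by the normal vector of a putative supporting hyperplane, identify the resulting one-dimensional marginal as a Dirac mass, and use the $t\leftrightarrow -t$ division trick (as in Lemma~\ref{L:mu1=mu2}) to cancel the phase and conclude $\psi(t^2 v^\top\Sigma v)=1$. The only difference is cosmetic: the paper proves the implication ``degenerate $\Rightarrow$ ($\psi$ constant or $\Sigma$ not strictly positive definite)'' and dismisses the converse as ``the same argument backwards,'' whereas you spell out that converse explicitly via the two cases $P=\delta_\mu$ and $v\cdot X=\mu\cdot v$ a.s.
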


\begin{proof}[\textbf{\upshape Proof:}]
Suppose that $P$ is supported on the hyperplane
$H:=\{x\in\mathbb{R}^d: y\cdot x=c\}$, 
where $y\in\mathbb{R}^d\setminus\{0\}$ and $c\in\mathbb{R}$.
Then   $P_{\langle y\rangle} =\delta_{x_0}$, where $x_0$ is the unique point in $\langle y\rangle \cap H$. By Lemma~\ref{L:cf}, 
\[
\phi_P(ty)=e^{ity\cdot x_0}, \quad t\in\mathbb{R}.
\]
Recalling the form of $\phi_P$, we deduce that
\[
e^{it\mu\cdot y}\psi(t^2 y^\top\Sigma y)=e^{ity\cdot x_0}, \quad t\in\mathbb{R}.
\]
The argument used to prove Lemma~\ref{L:mu1=mu2} shows that
$\mu\cdot y=x_0\cdot y$, and hence that
\[
\psi(t^2 y^\top\Sigma y)=1, \quad t\in\mathbb{R}.
\]
This in turn implies that either $\psi\equiv 1$ or $y^\top\Sigma y=0$.
Thus either $\psi$ is constant or $\Sigma$ is not strictly positive definite.

The converse is proved by running the same argument backwards.
\end{proof}

\begin{lemma}\label{L:Qexists}
Let $P$ be a non-degenerate elliptical distribution on $\mathbb{R}^d$, 
with characteristic function
\[
\phi_P(\xi)=e^{i\mu_1.\xi}\psi(\xi^\top\Sigma_1 \xi),
\quad \xi\in\mathbb{R}^d.
\]
Then, for each vector $\mu_2\in\mathbb{R}^d$ and each positive semi-definite $d\times d$ matrix $\Sigma_2$, there exists an elliptical distribution $Q$ on $\mathbb{R}^d$ with characteristic function
\[
\phi_Q(\xi)=e^{i\mu_2.\xi}\psi(\xi^\top\Sigma_2 \xi),
\quad \xi\in\mathbb{R}^d.
\]
\end{lemma}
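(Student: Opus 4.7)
The plan is to construct $Q$ directly as an affine pushforward of $P$. Since $P$ is non-degenerate, Lemma~\ref{L:nondegen} tells us that $\Sigma_1$ is strictly positive definite. In particular $\Sigma_1$ admits an invertible positive-definite square root $\Sigma_1^{1/2}$, and we may form $\Sigma_1^{-1/2}$. Let $\Sigma_2^{1/2}$ denote the positive semi-definite square root of $\Sigma_2$ (which exists by the spectral theorem), and set
\[
A:=\Sigma_2^{1/2}\Sigma_1^{-1/2}.
\]
Then $A$ satisfies the key algebraic identity $A\Sigma_1 A^\top=\Sigma_2^{1/2}\Sigma_1^{-1/2}\Sigma_1\Sigma_1^{-1/2}\Sigma_2^{1/2}=\Sigma_2$, which is exactly what one needs to convert an elliptical distribution with shape matrix $\Sigma_1$ into one with shape matrix $\Sigma_2$ via a linear change of variables.

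Having chosen $A$, I would define $Q$ to be the pushforward of $P$ under the affine map $T(x):=A(x-\mu_1)+\mu_2$. Then a direct characteristic-function computation yields
\[
\phi_Q(\xi)=\int_{\mathbb{R}^d}e^{i\xi\cdot T(x)}\,dP(x)=e^{i\mu_2\cdot\xi}e^{-i\mu_1\cdot(A^\top\xi)}\phi_P(A^\top\xi).
\]
Substituting the given form of $\phi_P$, the two $\mu_1$-phases cancel, leaving
\[
\phi_Q(\xi)=e^{i\mu_2\cdot\xi}\psi\bigl((A^\top\xi)^\top\Sigma_1(A^\top\xi)\bigr)=e^{i\mu_2\cdot\xi}\psi(\xi^\top\Sigma_2\xi),
\]
where the final equality uses $A\Sigma_1A^\top=\Sigma_2$. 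This is the desired form, so $Q$ is an elliptical distribution with the required characteristic function.

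There is no real obstacle here; the only thing one needs to ensure is that $\Sigma_1^{-1/2}$ makes sense, which is precisely why the non-degeneracy hypothesis on $P$ (via Lemma~\ref{L:nondegen}) is invoked. The rest is the standard fact that the pushforward of a Borel probability measure under a measurable map is again a Borel probability measure, together with the change-of-variables identity for characteristic functions under affine maps.
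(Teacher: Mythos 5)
Your proposal is correct and follows essentially the same route as the paper: both construct $Q$ as the pushforward of $P$ under an affine map $x\mapsto M(x-\mu_1)+\mu_2$, invoking Lemma~\ref{L:nondegen} to guarantee that $\Sigma_1$ is invertible, and then verify the characteristic function by the standard change-of-variables identity. The only cosmetic difference is your choice of $M=\Sigma_2^{1/2}\Sigma_1^{-1/2}$ via symmetric square roots, where the paper uses $S_1^{-1}S_2$ from general factorizations $\Sigma_j=S_j^\top S_j$; your bookkeeping of the transpose in $\phi_P(A^\top\xi)$ is carried out cleanly.
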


\begin{proof}[\textbf{\upshape Proof:}]
Since $\Sigma_1,\Sigma_2$ are positive semi-definite, we can write them as $\Sigma_1=S_1^\top S_1$ and $\Sigma_2=S_2^\top S_2$,
where $S_1,S_2$ are $d\times d$ matrices. Further, as $P$ is non-degenerate,  Lemma~\ref{L:nondegen} implies that $\Sigma_1$ is strictly positive definite, and so $S_1$ is invertible. Let $A:\mathbb{R}^d\to\mathbb{R}^d$ be the affine map defined by
\[
Ax:=S_1^{-1}S_2(x-\mu_1)+\mu_2, \quad x\in\mathbb{R}^d,
\]
and set $Q:=PA^{-1}$. We shall show that $\phi_Q$ has the required form.

First of all, we remark that, if $\tilde{P}$ and $\tilde{Q}$ denote the translates of $P,Q$ by $-\mu_1$ and $-\mu_2$ respectively, then
\[
\phi_{\tilde{P}}(\xi)=e^{-i\mu_1\cdot\xi}\phi_P(\xi)
\quad\text{and}\quad
\phi_{\tilde{Q}}(\xi)=e^{-i\mu_2\cdot\xi}\phi_Q(\xi),
\quad \xi\in\mathbb{R}^d.
\]
Further, $\tilde{Q}=\tilde{P}T^{-1}$, where $T:\mathbb{R}^d\to\mathbb{R}^d$ is the linear map given by 
\[
Tx:=S_1^{-1}S_2x, \quad x\in\mathbb{R}^d.
\]
By a calculation similar to that in the proof of Lemma~\ref{L:cf}, we have 
\[
\phi_{\tilde{P}T^{-1}}(\xi)=\phi_{\tilde{P}}(T\xi), \quad \xi\in\mathbb{R}^d.
\]
Putting all of this together, we get
\begin{align*}
e^{-i\mu_2\cdot\xi}\phi_Q(\xi)
&=\phi_{\tilde{Q}}(\xi)
=\phi_{\tilde{P}T^{-1}}(\xi)
=\phi_{\tilde{P}}(T\xi)\\
&=e^{-i\mu_1\cdot\xi}\phi_P(T\xi)
=e^{-i\mu_1\cdot\xi}\phi_P(S_1^{-1}S_2\xi).
\end{align*}
Recalling the form of $\phi_P$ and the fact that $\Sigma_j=S_j^\top S_j$ for $j\in\{1,2\}$, we obtain
\begin{align*}
e^{-i\mu_2\cdot\xi}\phi_Q(\xi)
&=e^{-i\mu_1\cdot\xi}e^{i\mu_1\cdot\xi}\psi(\xi^\top S_2^\top S_1^{-\top}\Sigma_1S_1^{-1}S_2\xi)\\
&=\psi(\xi^\top S_2^\top S_2\xi)=\psi(\xi^\top\Sigma_2\xi).
\end{align*}
Thus $\phi_Q$ does indeed have the required form.
\end{proof}

\begin{proof}[\textbf{\upshape Completion of the proof of Theorem~\ref{T:CWellipticalsharp}:}]
By Lemma~\ref{L:nondegen}, since $P$ is a non-degenerate elliptical distribution, 
its characteristic function is given by
\[
\phi_P(\xi)=e^{i\mu.\xi}\psi(\xi^\top\Sigma_1\xi), \quad \xi\in\mathbb{R}^d,
\]
where $\psi$ is non-constant and $\Sigma_1$ is strictly positive definite. 
As $S$ is not an sm-uniqueness set, there exists a
symmetric $d\times d$ matrix $A$ such that $x^\top Ax=0$ for all $x\in S$ but $A\ne0$. If $\epsilon>0$ is small enough, then $\Sigma_1+\epsilon A$ is also strictly positive definite. Fix such an $\epsilon$, and set $\Sigma_2:=\Sigma_1+\epsilon A$. By Lemma~\ref{L:Qexists},
there exists an elliptical distribution $Q$ on $\mathbb{R}^d$ such that
\[
\phi_Q(\xi)=e^{i\mu.\xi}\psi(\xi^\top\Sigma_2\xi), \quad \xi\in\mathbb{R}^d.
\]
By Lemma~\ref{L:nondegen} again,  $Q$ is non-degenerate, since $\psi$ is non-constant  $\Sigma_2$ is positive definite. Also, if $x\in S$, then 
\[
x^\top\Sigma_2x=x^\top\Sigma_1x+x^\top Ax=x^\top\Sigma_1x,
\]
so $\phi_Q(tx)=\phi_P(tx)$ for all $t$.
Hence $P_{\langle x\rangle}=Q_{\langle x\rangle}$ for all $x\in S$.
On the other hand, since $\Sigma_1\ne\Sigma_2$ and $\psi$ is non-constant, the argument of the proof of Lemma~\ref{L:end} shows that $P\ne Q$.
\end{proof}


\section{Application to testing for equality of multivariate distributions}\label{S:statistics}
\subsection{A Kolmogorov--Smirnov test for elliptical distributions}

Several of the variants of the Cram\'er--Wold theorem mentioned in the introduction are useful in deriving
statistical tests for the equality of multivariate distributions (see, e.g., \cite{CFR06, CF09, FMR22a, FMR23}).
Theorem~\ref{T:CWelliptical} is no exception. 
In this section, we propose a test for the one- and two-sample problems for elliptical distributions, 
based on Theorem~\ref{T:CWelliptical}. 
The problem of goodness-of-fit testing for elliptical distributions is discussed in several recent articles (see, e.g.,  \cite{CJMZ22, DL20, HMS21}),  but always with more restrictive assumptions  than ours (supposing, for example, that the generator function  $\psi$ is known). We also remark that the idea of using projections of elliptical distributions is mentioned in \cite{No13}, though that article addresses a different problem.

Since the one- and two-sample problems are very similar, we describe  just  the two-sample problem.
Given two samples $\mathcal{X}_n:=\{X_1, \ldots, X_n\}\subset \mathbb R^d$
and $\mathcal{Y}_m:=\{Y_1, \ldots, Y_m\}\subset \mathbb R^d$ of multivariate elliptical distributions $P$ and $Q$,
 we consider the testing problem 
\[
\mathbf {H0}: P = Q \quad\text{vs}\quad P \neq Q,
\]
based on the samples $\mathcal{X}_n, \mathcal{Y}_m$.

Let $\{v_1, \dots v_D\}\subset\mathbb{R}^d$ be a fixed symmetric-matrix uniqueness set, where $D=(d^2 +d)/2$. 
Let $F_{n,\langle v_j\rangle}$ be the empirical distribution of $P_{\langle v_j\rangle}$  and 
$G_{m,\langle v_j\rangle}$ be the empirical distribution of $Q_{\langle v_j\rangle}$, for $j\in\{1, \ldots, D\}$.
We define a random-projection test (which we abbreviate as \textbf{RPT}) through the statistic,
\[
\sqrt{\frac{nm}{n+m}} KS_{n,m,D}
:= \sqrt{\frac{nm}{n+m}} \max_{j\in\{1, \ldots, D\}} \| F_{n,\langle v_j\rangle}-G_{m,\langle v_j\rangle}\|_{\infty}.
\]

Since  the statistic  is not distribution-free, 
in order to obtain the critical value for a level-$\alpha$ test, 
we approximate the distribution using bootstrap on the original samples $\mathcal{X}_n, \mathcal{Y}_m$
by generating a large enough number $B$ of values of $KS_{n,m}$, 
for each bootstrap sample
choosing $n$ vectors  from $\mathcal{X}_n$ and $m$ vectors from $\mathcal{Y}_m$, with replacement. 
See, for instance, \cite{HM88} for the two-sample bootstrap.
We then take as critical value $c^*_{\alpha}$, 
the $(1-\alpha)$-quantile of the empirical bootstrap sample, i.e., we reject the null hypothesis when 
\[
\sqrt{\frac{nm}{n+m}} KS_{n,m,D} > c^*_{\alpha}.
\]
The validity of the bootstrap in this case  follows from  \cite[Theorems~3 and~4]{Pr95}. 
Therefore the proposed test has asymptotic level $\alpha$.
Also it is consistent since, 
under the alternative hypothesis, $P\ne Q$, so, by Theorem~\ref{T:CWelliptical},
there exists a $j$ such that $P_{\langle v_j\rangle}\ne Q_{\langle v_j\rangle}$, so
$\max_{j\in\{1, \ldots, D\}} \|F_{n,\langle v_j\rangle}-G_{m,\langle v_j\rangle }\|_{\infty} >0$,
and thus
$\sqrt{\frac{nm}{n+m} } KS_{n,m,D}  \to \infty$.

We summarize our conclusions in the following proposition.

\begin{proposition}
Consider the test proposed above.
\begin{enumerate}[\normalfont(i)]
\item The bootstrap version of the test has asymptotic level $\alpha$.
\item Under the alternative,
\[
\sqrt{\frac{nm}{n+m} } KS_{n,m,D}  \to \infty, 
\]
i.e., the test is consistent.
\end{enumerate}
\end{proposition}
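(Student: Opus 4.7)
The plan is to handle (ii) first, since it follows directly from Theorem~\ref{T:CWelliptical}, and then treat (i) by invoking the bootstrap theory for two-sample empirical processes.

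For (ii), under the alternative $P\ne Q$, Theorem~\ref{T:CWelliptical} applied to the sm-uniqueness set $\{v_1,\dots,v_D\}$ guarantees the existence of an index $j_0$ with $P_{\langle v_{j_0}\rangle}\ne Q_{\langle v_{j_0}\rangle}$. Writing $F_{\langle v_{j_0}\rangle}$ and $G_{\langle v_{j_0}\rangle}$ for the true c.d.f.'s of these one-dimensional projections, the classical Glivenko--Cantelli theorem applied to each sample gives $\|F_{n,\langle v_{j_0}\rangle}-G_{m,\langle v_{j_0}\rangle}\|_\infty\to\|F_{\langle v_{j_0}\rangle}-G_{\langle v_{j_0}\rangle}\|_\infty=:\delta>0$ almost surely. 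Multiplying by the divergent factor $\sqrt{nm/(n+m)}$ (as $n,m\to\infty$) yields $\sqrt{nm/(n+m)}\,KS_{n,m,D}\to\infty$, which is exactly the content of (ii).

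For (i), I would write the statistic as $T_{n,m}=\max_{j\in\{1,\dots,D\}} T_{n,m}^{(j)}$ with $T_{n,m}^{(j)}:=\sqrt{nm/(n+m)}\,\|F_{n,\langle v_j\rangle}-G_{m,\langle v_j\rangle}\|_\infty$. Under $H_0$, for each fixed $j$ the one-dimensional two-sample empirical process converges weakly in $\ell^\infty(\mathbb{R})$ to a continuous centred Gaussian (Brownian-bridge-type) process, and joint convergence across the $D$ coordinates is obtained from the functional CLT applied to the Donsker class obtained by taking indicators of half-lines pulled back through the projections onto $\langle v_1\rangle,\dots,\langle v_D\rangle$. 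The continuous mapping theorem, applied to the functional $(g_1,\dots,g_D)\mapsto\max_j\|g_j\|_\infty$, then gives a non-degenerate limit law for $T_{n,m}$. The two-sample bootstrap CLT of Præstgaard~\cite{Pr95} shows that, conditionally on the data, the resampled process converges to the same Gaussian limit, so the conditional $(1-\alpha)$-quantile $c_\alpha^*$ converges in probability to the $(1-\alpha)$-quantile of the limit distribution. This gives asymptotic level $\alpha$.

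The main technical point is justifying bootstrap validity jointly across the $D$ projections rather than for a single coordinate at a time. For any individual $j$, both the weak convergence of the KS statistic and the bootstrap approximation are entirely classical, so the work reduces to ensuring that the bootstrap process tracks the original empirical process in $\ell^\infty(\mathbb{R})^D$ so that the max-sup functional is preserved in the limit. This is precisely what Præstgaard's result delivers once applied to the enlarged Donsker class above; after that, (i) follows routinely, and (ii) combined with the boundedness in probability of $c_\alpha^*$ guaranteed by (i) confirms consistency of the test.
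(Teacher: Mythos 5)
Your proposal is correct and follows essentially the same route as the paper, which justifies the level claim by citing Pr\ae stgaard's two-sample bootstrap CLT \cite[Theorems~3 and~4]{Pr95} and the consistency claim by combining Theorem~\ref{T:CWelliptical} with the divergence of the normalizing factor. Your write-up merely fills in details the paper leaves implicit (the Glivenko--Cantelli step showing the max stays bounded away from zero, and the joint Donsker/continuous-mapping argument across the $D$ projections), so there is nothing to correct.
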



\subsection{A small simulation study}\label{S:sss}

In this subsection, we study the power of the proposed test for two elliptical distributions 
$F_1$ and $F_2$  belonging to the multivariate Student distribution in $\mathbb{R}^d$.  
More precisely, we suppose that
\[
F_i \sim t_{\nu_i} (\mu_i, \Sigma_i), \quad i\in\{1,2\}, 
\]
where $\nu_i \in \mathbb{N}$ are the numbers of degrees of freedom, 
$\mu_i \in \mathbb{R}^d$  are the means,
and  $\Sigma_i \in \mathbb R^{d \times d}$ are  the covariance matrices.
We consider three different scenarios. 

In the first scenario, we fix $\mu_i=0$ and $\Sigma_i = \mathbf{I}_{d\times d}$ (the $d\times d$ identity matrix), for $i\in\{1,2\}$ and we vary only the degrees of freedom:
 $\nu_1=1$ and $\nu_2 \in \{1,2,3,4\}$. The power functions are plotted in Fig.~\ref{Fig:esc1} for different sample sizes. 

In the second scenario, we vary only the mean value of one of the distributions, 
taking $\nu_i=2$ and $\Sigma_i= \mathbf{I}_{d\times d}$, for $i\in\{1,2\}$, 
while  $\mu_1=0$  and $\mu_2 \in [0,1/2]$. See Fig.~\ref{Fig:esc2}.
  
Lastly, in the third scenario, we vary only the covariance matrices. We fix $\nu_i=2$ and $\mu_i=0$ for $i\in\{1,2\}$,
 while $\Sigma_1= \mathbf{I}_{d \times d}$ and $\Sigma_2=  \mathbf{I}_{d \times d} + \theta \mathbf{1}_{d \times d}$, where $\mathbf{1}_{d \times d}$ denotes the matrix with value $1$ in all its entries,  and   $\theta \in [0,1]$.
See Fig.~\ref{Fig:esc3}. 
   
In all cases we generate iid samples  of equal sizes $n= n_1=n_2$, where $n \in \{100,500,1000\}$  and $d \in \{5,10,50\}$ for each scenario. The null distribution is approximated by bootstrap. 
For the power function, we report the proportion of rejections in 10000 replicates. 
The results are quite encouraging in all the three scenarios considered.
   Moreover, in the simulations where the sample size is large, the test size remained around the prespecified significance level ($\alpha=0.05$).

 \begin{figure}[H]
\centering
\subfloat{\includegraphics[width=\textwidth]{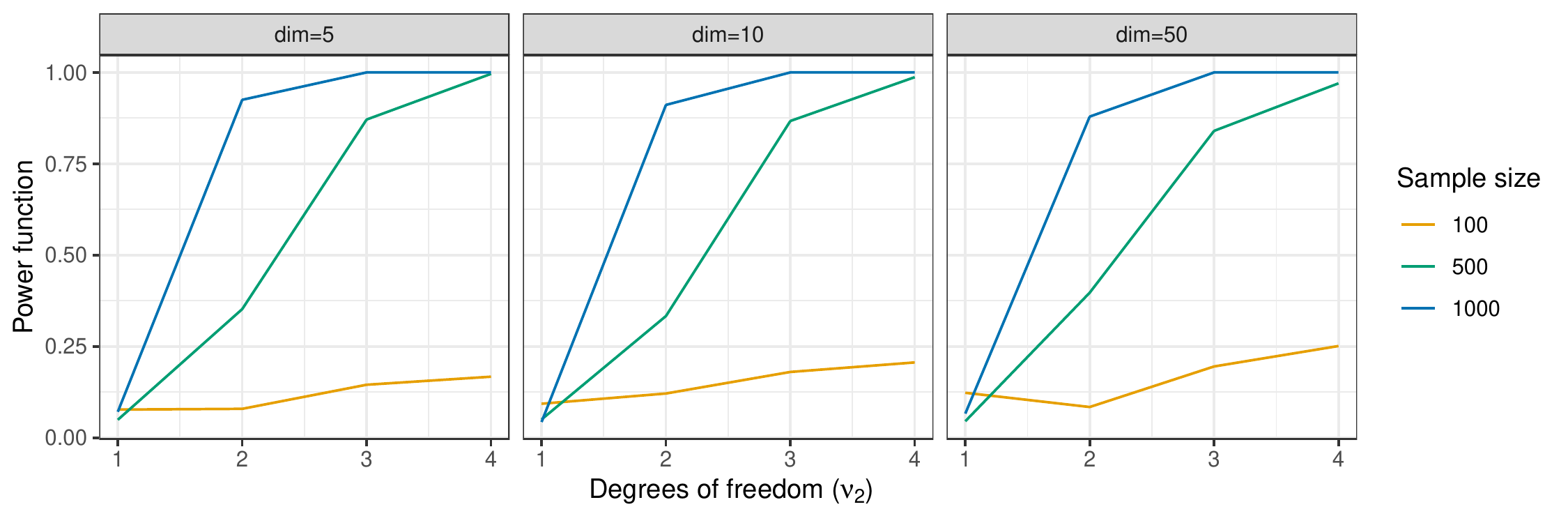}}
\caption{Power function of  test proposed in \S\ref{S:sss} for two elliptical distributions.
Scenario 1: changing the degrees of freedom and the sample sizes.} 
\label{Fig:esc1}
\end{figure}

\begin{figure}[H]
\centering
\subfloat{\includegraphics[width=\textwidth]{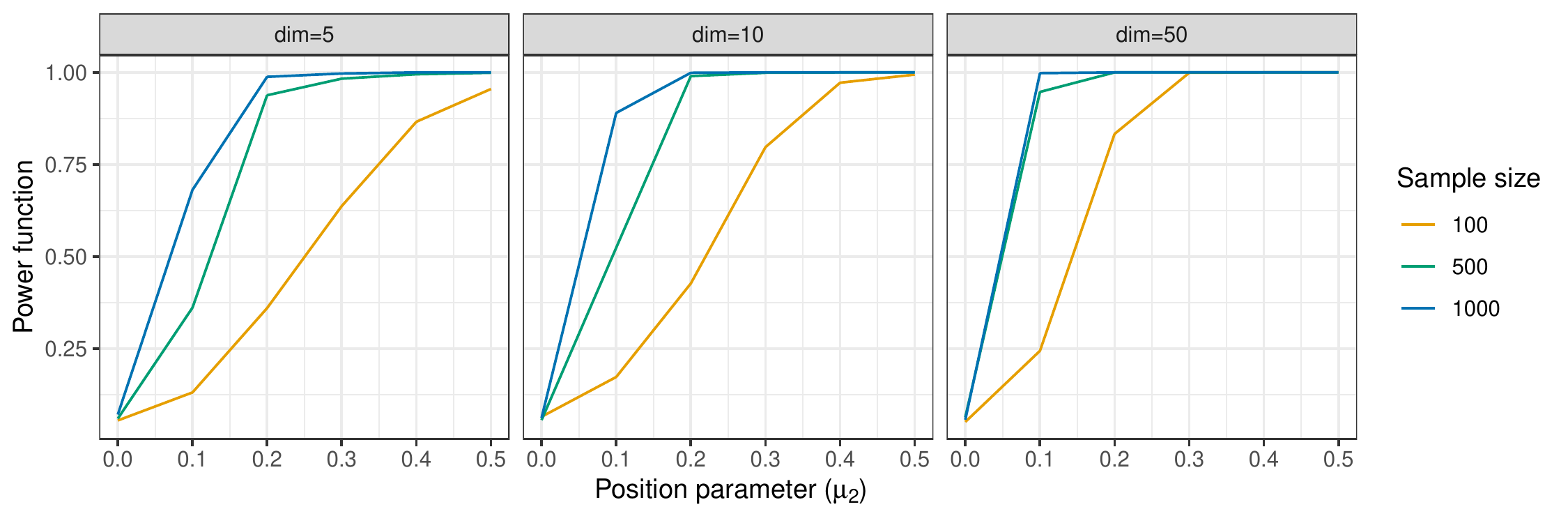}}
\caption{Power function of  test proposed in \S\ref{S:sss} for two elliptical distributions.
Scenario 2: changing  the mean value and the sample sizes.} 
\label{Fig:esc2}
\end{figure}

\begin{figure}[H]
\centering
\subfloat{\includegraphics[width=\textwidth]{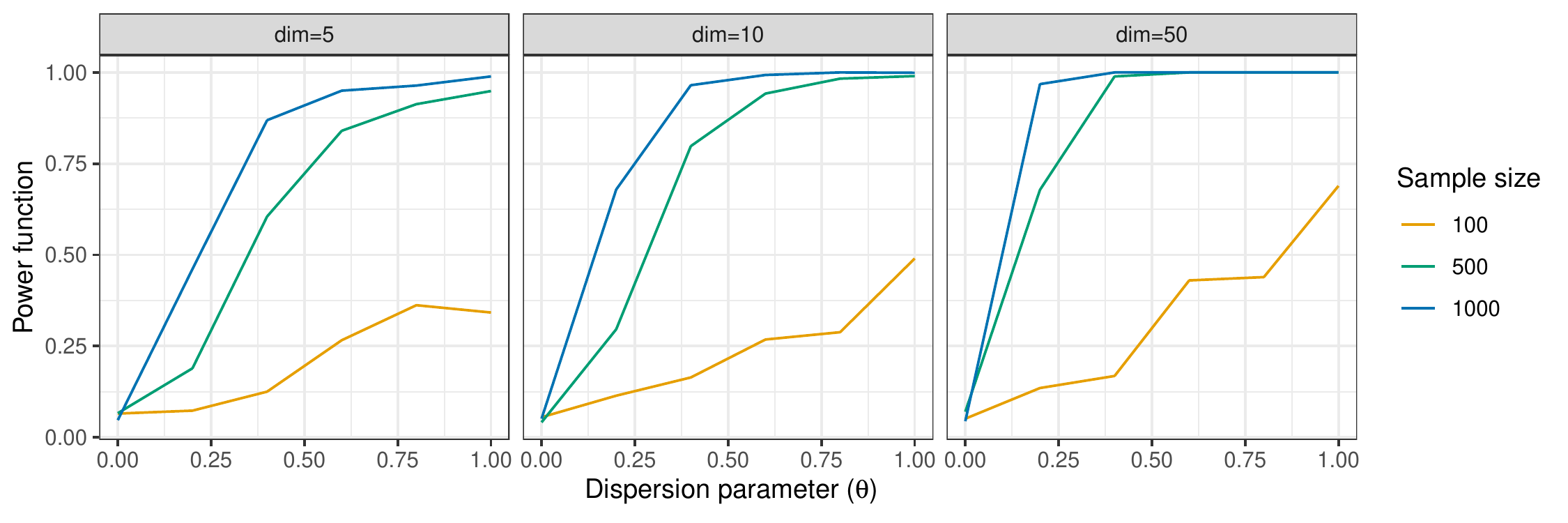}}
\caption{Power function of  test proposed in \S\ref{S:sss} for two elliptical distributions.
Scenario 3: changing  the covariance matrix as a function of $\theta$ and the sample sizes} 
\label{Fig:esc3}
\end{figure}


\subsection{Comparison with other tests}\label{S:comparison}
	
The following example compares the performance of the proposed test 
RPT for the equality of two distributions,
with simulated data from a mixture of distributions, with some other alternatives known from the literature.
The cases when the mixture has an elliptical distribution (and in particular a multivariate normal distribution) are considered, as well as the case  when the mixture distribution is not elliptical.  The performance of the RPT test is compared with two other proposals that impose different assumptions about data distributions. 
There are many test proposals for equality of multivariate normal distributions, 
but most of them are for equality of the mean---with known or unknown covariance matrix---or for equality of the covariance matrices---with known or unknown mean vector, but just a few more general. 
This is the reason why we have chosen these three particular competitors.

Let $\mu \in \mathbb{R}$ and  $\textbf{1}_d= (1,1, \dots,1)^\top \in \mathbb{R}^d$. Consider three
independent random vectors in $\mathbb{R}^d$, with distributions  given by
\begin{align*}
X_{1, \mu} &\sim \mathcal{N} \left( \mu \textbf{1}_d, \mathbf{I}_{d \times d} \right),\quad
X_{2, \mu}  \sim \mathcal{C} \left(\mu \textbf{1}_d, \mathbf{I}_{d \times d} \right),\\
X_{3, \mu} &\sim B \mathcal{N} \left( (1+\mu) \textbf{1}_d, \mathbf{I}_{d \times d} \right)+(1-B) \mathcal{N} \left( -(1+\mu) \textbf{1}_, \mathbf{I}_{d \times d} \right),
\end{align*}
with $B \sim \mathcal{B}(1/2)$, where $\mathcal{B}$ is the Bernoulli distribution, 
and where $\mathcal{N}$ and $\mathcal{C}$ are the Normal and Cauchy distributions in dimension $d$  respectively. 
Let $F_{1, \mu},F_{2, \mu}$ and $F_{3, \mu}$  be their respective probability distributions. 
We define the convex combination 
\[
F_{\mu}= \alpha_1  F_{1, \mu} + \alpha_2  F_{2, \mu} +\alpha_3  F_{3, \mu},
\]  
with  $\alpha_1,\alpha_2,\alpha_3 \geq 0$  and  $\alpha_1+\alpha_2+\alpha_3=1$.
Note that, if $\alpha_2=\alpha_3=0$, then the distribution is Gaussian; 
if  $\alpha_3=0$, then the distribution is elliptical; 
and  if  $\alpha_3 \neq 0$, then the distribution is not elliptical. 

In all cases we generate iid samples of equal sizes  $n_1 = n_2=100$,
$d=5$ and $F_1 \sim F_{\mu_1}$ and  $F_2 \sim F_{\mu_2}$ with $\mu_1=0$ and $\mu_2 \in \{0, 0.2, 0.4, 0.6 \}$. When $\mu_2=0$ we are under the null hypothesis
(that the distributions of $F_1$ and $F_2$ are the same). 
The power is studied in different scenarios obtained for different values of $\alpha=(\alpha_1,\alpha_2,\alpha_3$).

 The power of RPT is compared to:  
 \begin{description}
 \item[\rm LRTN] a simultaneous test for the mean and the covariance matrix for two samples under the assumption of normality (see \cite{LLL10} for details);  
 \item[\rm L2Norm] a more general test involving non-normal distributions   
 (see \cite{HN21} for details), and
 \item[\rm eDistance]  a non-parametric test developed in \cite{SR04}. 
 \end{description}
 
The L2Norm and eDistance tests are implemented in $\textsf{R}$ through the 
\texttt{sim2.2018HN} and \texttt{eqdist.etest} functions of the \texttt{SHT} and \texttt{energy} packages respectively. Note that, as mentioned in \cite{HN21}, the L2Norm test does not include in its assumptions all elliptical distributions. In the RPT test, the number of the $k$-nearest neighbors is chosen by cross-validation.

As shown in Fig.~\ref{F:Ex2}, if $\alpha_2 \neq 0$ the tests  LRTN and L2Norm perform very poorly.  This is to be expected, as they are not designed for heavy-tailed distributions. Also, if  $\alpha_2 \neq 0$ and $\alpha_3 =0$,
then the best-performing test in all cases is RPT. Even if the distributions are not elliptical, i.e., $\alpha_3 \neq 0$, RPT is a competitive test with eDistance.

\begin{figure}[htb]
\centering
\subfloat{\includegraphics[width=\textwidth]{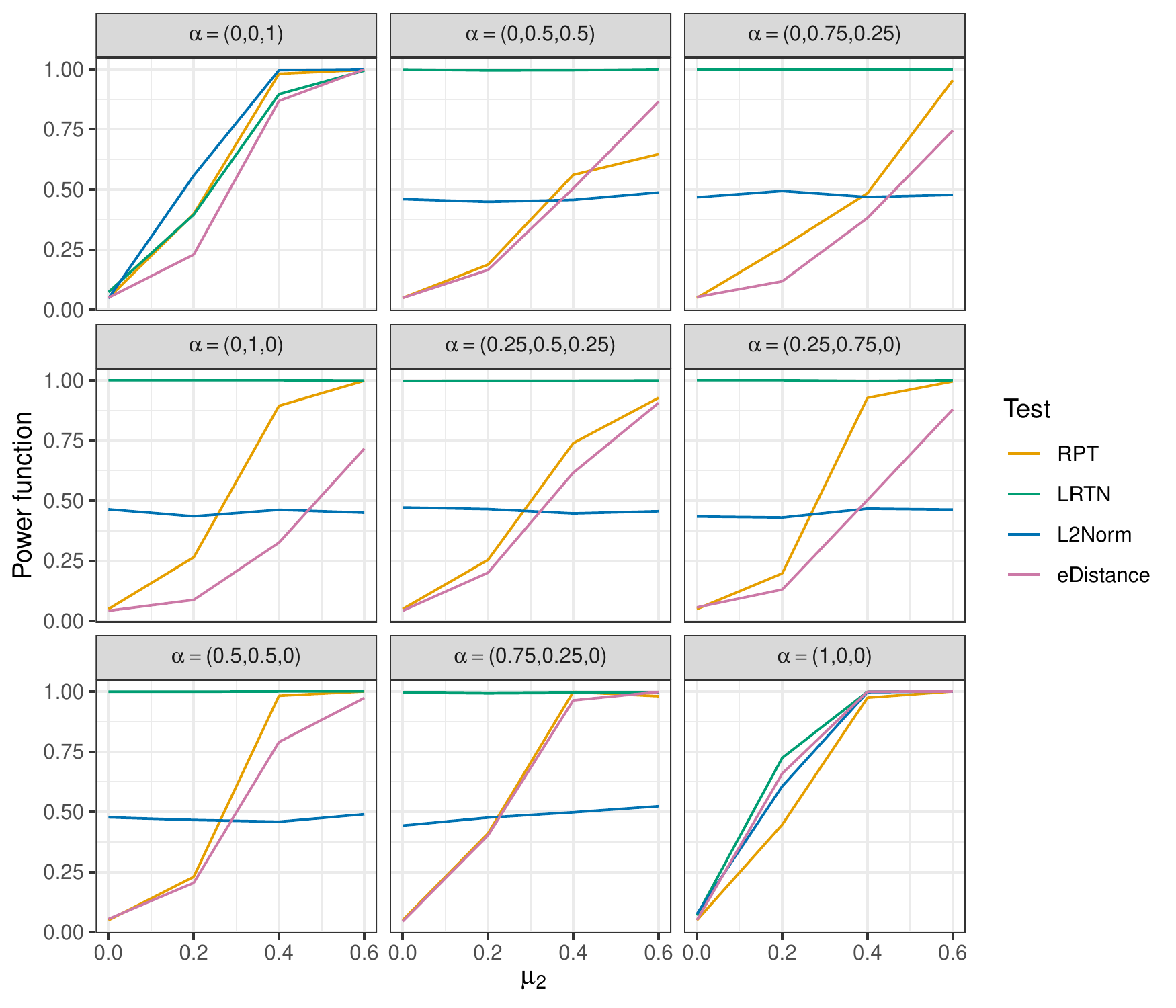}}
\caption{Power functions of the RPT, LRTN, L2Norm, eDistance tests in \S\ref{S:comparison}, for $\mu_2 \in \{0, 0.2,0.4,0.6\}$ and for different values of the vector $\alpha$.}
\label{F:Ex2}
\end{figure}

	
	\section{Application to binary classification}\label{S:binary}
\subsection{Binary classification  of data from elliptical distributions}\label{S:binaryelliptical}	

Linear Discriminant Analysis and Quadratic Discriminant Analysis 
are two of the more classical and well-known learning methods,
both based on the Mahalanobis distance.
They assume that the distributions of the different classes are 
multivariate Gaussian distributions.	
In this section we consider the more challenging learning problem 
where we only assume that the distributions are elliptical. 
	
We propose the following algorithm for binary classification, based on our  results. 
Let $(X_1,Y_1), \ldots, (X_n,Y_n)\in \mathbb R^d \times \{0,1\}$ be an iid  training sample,
where the distributions of $X_1 \vert Y=0$ and $X_1 \vert Y=1$ are unknown elliptical distributions. 
From the training sample, a proportion $\omega \in (0,1)$ will be used to assign weights to  the different directions. Let $n_1$ be the integer part of $(1-\omega)n$, 
and let $\mathcal{X}_{n_1}:= \{(X_1,Y_1), \ldots, (X_{n_1},Y_{n_1})\}\in \mathbb R^d \times \{0,1\}$.
We want to classify the new data $X$. Set $D:=(d^2+d)/2$.

\medbreak
	
	\textbf{Algorithm:}
	\begin{itemize}
		\item Choose an sm-uniqueness set of $D$ directions in $\mathbb R^d$.  
		
	\item	We start using   the data from the subsample $\mathcal{X}_n \setminus \mathcal{X}_{n_1}$ of the training sample  to assign weights $w_j$ to each direction as follows.
		\begin{enumerate}[\normalfont(i)]

		\item For each direction $u_j, ~j\in\{1, \ldots, D\}$, consider the two subsamples
		\begin{equation} \label{project}
		\{	(Z_{1,j},Y_1), \ldots, (Z_{n_1,j},Y_{n_1})\}\in \mathbb R \times \{0,1\},\end{equation}
		\begin{equation} \label{project2}
		\{	(Z_{_{n{_1}+1,j}},Y_{n{_1}+1}), \ldots, (Z_{n,j},Y_{n})\}\in \mathbb R \times \{0,1\},
	\end{equation}
		where $Z_{i,j}=\langle u_j, X_i\rangle,~ i\in\{1, \ldots, n\}$.
		\item Apply the classical $k$-NN rule based on the subsample  (\ref{project}) 
		to classify $Z_{i,j}, i\in\{n_1+1, \ldots,n\}$. That is, let the corresponding score be given by
			
			\begin{align*}
			S_{X_i,j}:= \{
			&\textrm{number of neighbors of the subsample  (\ref{project})}  \\
			&\textrm{with label $1$ among the $k$-nearest data to }  Z_{i,j} \}.
			\end{align*}

\item Set  $p_{X_i,j}:=S_{X_i,j}/k$. If  $p_{X_i,j}> 1/2$ then classify the new data $Z_{i,j}$ as belonging to class $1$ with $i\in\{n_1+1, \ldots,n\}$ (otherwise classify it as  class $0$).  Denote by $\hat Y_{i,j}$ the  label assigned to $Z_{i,j}$.
		
\item 
We define
\[
w_{j}:= \frac{1}{n-n_1} \sum_{i=n_1+1}^{n} \mathcal I_{\hat Y_{i,j}= Y_i}.
\]

\end{enumerate}

\item Lastly, given a new datum $X$, if 
\[
\sum_{j=1}^{D}w_j p_{X,j} \geq  \sum_{j=1}^{D} w_j (1-p_{X,j}),
\]
then classify  $X$ as belonging to class $1$. Otherwise classify it as class~$0$.
\end{itemize}

\begin{remarks} 
(i) One way to choose the set $\mathcal D$ is as follows.  We order the values $w_{j}, \ j\in\{1, \ldots D\}$,
 and take the $\Delta \%$ largest values of $w_{j}$,
  i.e., we  let $\{w^{(1)}, \ldots w^{(D)}\}$ be the order statistics 
  and $R_1, \ldots R_D$ be the corresponding rank statistics. Then   we set
\[
\mathcal D:= \{j : R_j \geq \Delta \times D  \}.
\]
Alternatively, we can initially take more than $D$ directions, and then choose the $D$ best ones. 
			
(ii) Using the weights in the last steps,
we can take into account that some directions may not be very good for classification on the projection,
 see Fig.~\ref{grupos}.
However, we need to split the training sample into two subsamples.
		
\begin{figure}[htb]
\centering
\subfloat{\includegraphics[width=75mm]{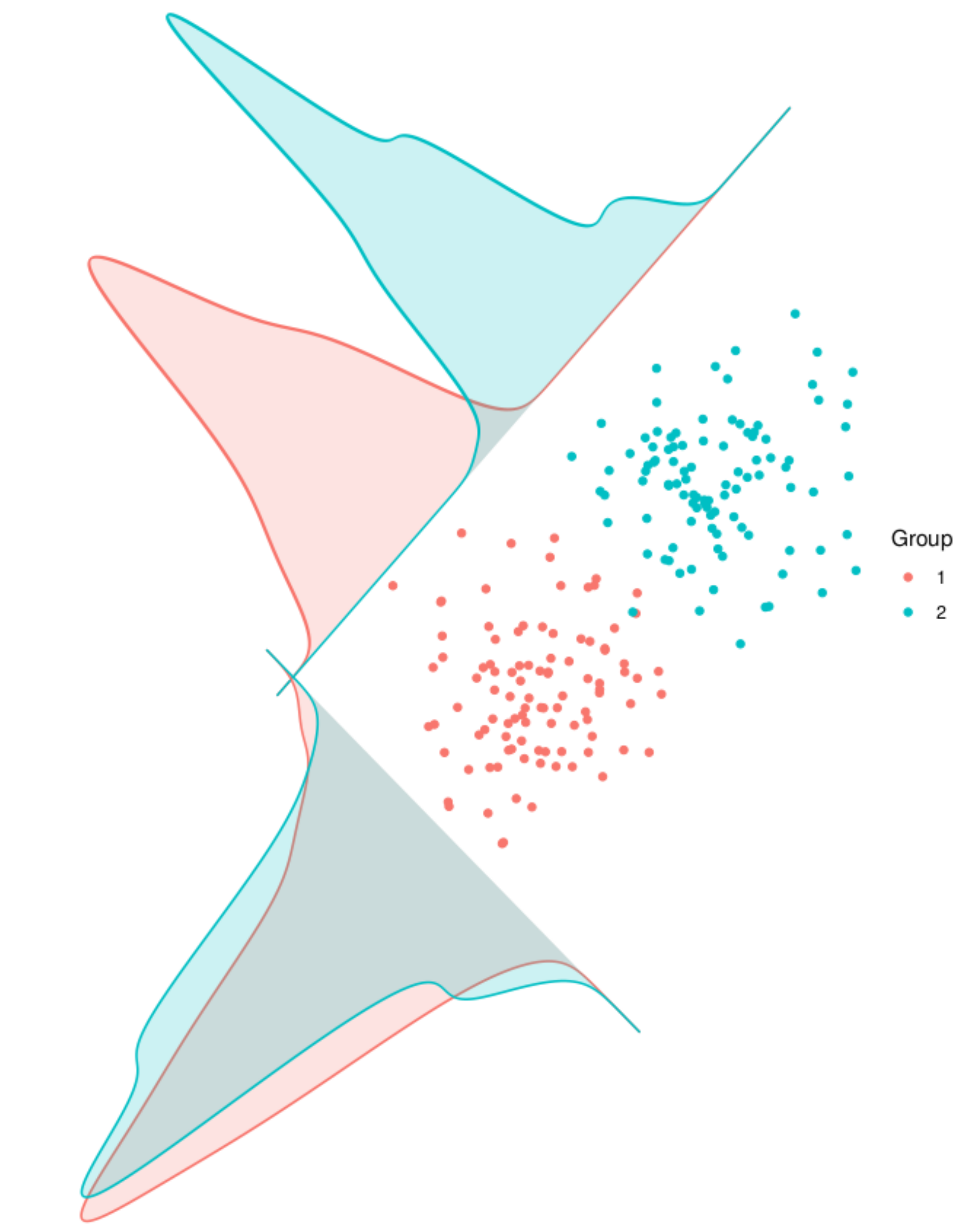}}
\caption{Illustration of Remark~(ii) in \S\ref{S:binaryelliptical}: empirical marginal densities in two random projections of two groups in dimension 2. }
\label{grupos}
\end{figure}	

\end{remarks}

	\subsection{An example with simulated data}\label{S:exsim}
	
To illustrate the performance of the proposed algorithm, we consider a binary classification problem with two classes of size 1000 in dimension 50. The data in both classes are iid and marginally independent with Cauchy distribution. The data in group 1 are centred at the origin. The data in group 2 are shifted in the direction of the vector $\eta=(\eta_1, \ldots, \eta_{50}) \in \mathbb{R}^{50}$, where $\eta_i$ is  chosen uniformly in $[0,3]$ for $i\in\{1, \ldots, 10\}$,  and $\eta_i= 0$ for $i\in\{11, \ldots,50\}$.

We consider $75 \%$ of the data as the training sample and the remaining data as the test sample.  
We repeat  the experiment 100 times under the same conditions.
At each replication, the classifier accuracy is calculated. 

Fig.~\ref{F:rf} shows the accuracy of different classifiers: random forest (RF), support vector machine (SVM), 
generalized linear  model (GLM), and our algorithm using random projections (RP). In the RP algorithm, we take $\Delta= 50 \%$ and $\omega=0.25$.  It is observed that our algorithm performs better than the other competitors.

\begin{figure}[H]
\centering
\subfloat{\includegraphics[width=85mm]{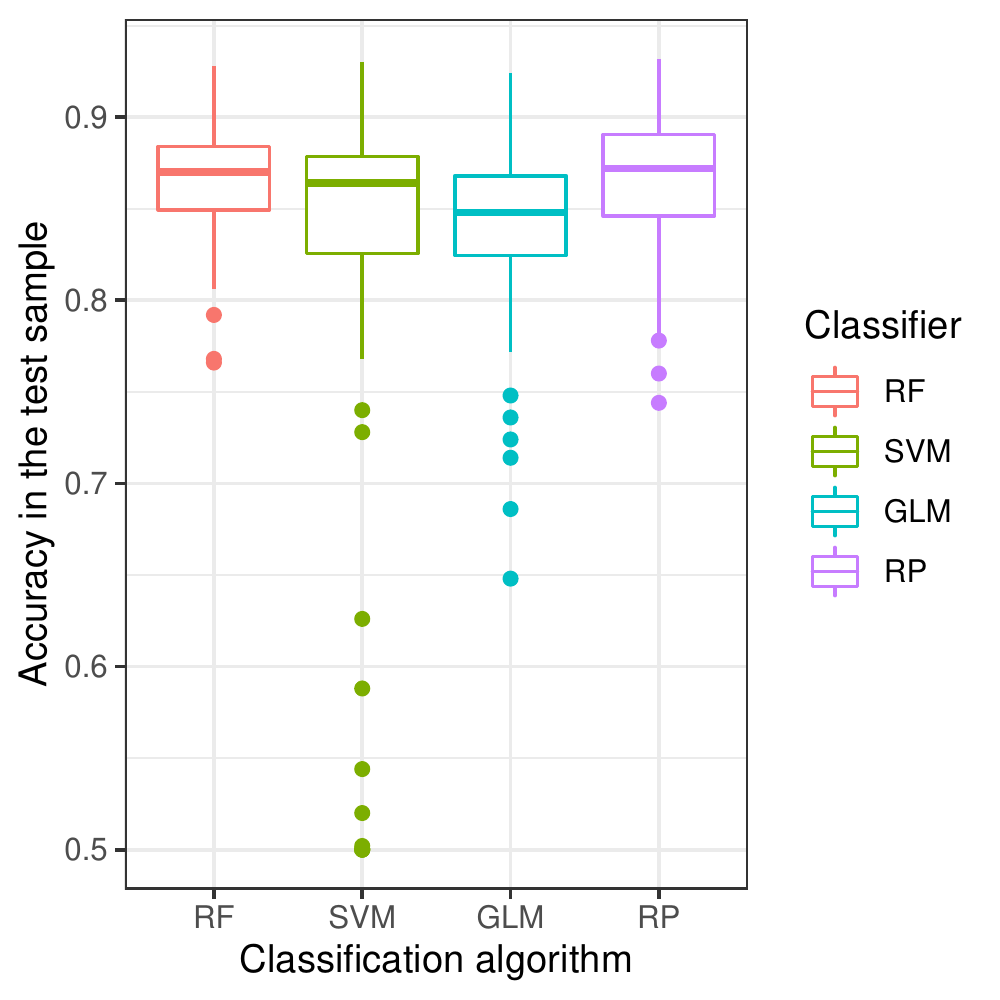}}
\caption{Accuracy in the test sample of the RF, SVM, GLM and RP classifiers in the example in \S\ref{S:exsim}.  }
\label{F:rf}
\end{figure}

\section{Conclusion}\label{S:conclusion}

In this article we have studied the problem of which sets $\mathcal{L}$ of lines in $\mathbb{R}^d$ determine elliptical distributions
in the sense that, if $P,Q$ are elliptical distributions whose projections satisfy $P_L=Q_L$ for all $L\in\mathcal{L}$,
then $P=Q$.
Combining our two main results, Theorems~\ref{T:CWelliptical} and \ref{T:CWellipticalsharp},
we have a precise characterization of such $\mathcal{L}$. 
In particular, there exist such sets $\mathcal{L}$ of cardinality $(d^2+d)/2$, and this number  is best possible.

We have applied our results to derive a Kolmogorov--Smirnov type test for equality of elliptical distributions,
and we have carried out a small simulation study of this test,
comparing it with other tests from the literature.
The results are quite encouraging. 
As a further application of our results, we have proposed
an algorithm for binary classification of data arising
from elliptical distributions, 
again supported by a small simulation.

Let us conclude with a question. 
As mentioned in the introduction, our main result, Theorem~\ref{T:CWelliptical}, is, in a certain sense, a
continuous analogue of  Heppes' result on finitely supported distributions. Recently, the authors 
obtained the following quantitative form of Heppes' theorem, expressed in terms of the total variation metric:
\[
d_{TV}(P,Q):=\sup\{|P(B)-Q(B)|: B \text{~Borel}\}.
\]

\begin{theorem}[\protect{\cite[Theorem~2.1]{FMR23}}]
Let $Q$ be a  probability measure on $\mathbb{R}^d$
whose support contains at most  $k$ points.
Let $H_1,\dots,H_{k+1}$ be  subspaces of $\mathbb{R}^d$ such that 
$H_i^\perp\cap H_j^\perp=\{0\}$ whenever $i\ne j$.
Then, for every Borel probability measure $P$ on $\mathbb{R}^d$, we have
$d_{TV}(P,Q)\le \sum_{j=1}^{k+1} d_{TV}(P_{H_j},Q_{H_j})$.
\end{theorem}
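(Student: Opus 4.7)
The plan is to exploit the fact that $\mu := P - Q$ has negative part $\mu^-$ bounded above by $Q$, and hence supported on $K := \text{supp}(Q) = \{x_1,\dots,x_k\}$. Writing $a_i := P(\{x_i\}) - Q(\{x_i\})$, $N_+ := \{i : a_i > 0\}$ and $N_- := \{i : a_i < 0\}$, this yields $d_{TV}(P,Q) = \mu^-(K) = \beta$, where $\beta := \sum_{i \in N_-}|a_i|$. So the goal reduces to proving $\beta \le \sum_{j=1}^{k+1} d_{TV}(P_{H_j}, Q_{H_j})$.

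For each $j$, I would test against the Borel set $B_j := \pi_{H_j}(N_-^*)$, where $N_-^* := \{x_i : i \in N_-\}$. Its preimage $\pi_{H_j}^{-1}(B_j) = N_-^* + H_j^\perp$ meets $K$ in exactly $\{x_i : i \in M_j\}$, where $M_j := \{i : \exists\, i' \in N_-,\ x_i - x_{i'} \in H_j^\perp\} \supseteq N_-$. A short computation then gives
\[
d_{TV}(P_{H_j}, Q_{H_j}) \;\ge\; (Q_{H_j} - P_{H_j})(B_j) \;=\; \beta \;-\; \sum_{i \in M_j \cap N_+}|a_i| \;-\; P\bigl((N_-^* + H_j^\perp) \setminus K\bigr).
\]

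Summing this inequality over $j = 1,\dots,k+1$, the hypothesis $H_j^\perp \cap H_{j'}^\perp = \{0\}$ for $j \ne j'$ is used in two crucial places.
(i) For distinct $i,i'$, at most one $j$ satisfies $x_i - x_{i'} \in H_j^\perp$; summing over $i' \in N_-$ gives $|\{j : i \in M_j\}| \le |N_-|$ for every $i \in N_+$, whence $\sum_j \sum_{i \in M_j \cap N_+}|a_i| \le |N_-|\,\alpha$ with $\alpha := \sum_{i \in N_+}|a_i|$.
(ii) For $z \notin K$ and fixed $i \in N_-$, at most one $j$ satisfies $z - x_i \in H_j^\perp$ (otherwise $z = x_i \in K$, contradiction); summing over $i \in N_-$ gives $\sum_j \mathbf{1}[z \in N_-^* + H_j^\perp] \le |N_-|$, whence $\sum_j P\bigl((N_-^* + H_j^\perp) \setminus K\bigr) \le |N_-|\, P(\mathbb{R}^d \setminus K)$.
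Combining these bounds with the identity $\alpha + P(\mathbb{R}^d \setminus K) = \beta$ (immediate from $\sum_i a_i = P(K) - 1 = -P(\mathbb{R}^d \setminus K)$), the summed inequality rearranges to $(k+1 - |N_-|)\,\beta \le \sum_{j=1}^{k+1} d_{TV}(P_{H_j}, Q_{H_j})$. Since $|N_-| \le k$, the coefficient $k+1-|N_-|$ is at least $1$, and the theorem follows.

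The main obstacle is obtaining the precise constant $1$ in the bound. A naive approach that replaces the two transversality counts by the crude bound $k+1$ loses a factor of roughly $2$ and only yields $\beta \le 2\sum_j d_{TV}(P_{H_j},Q_{H_j})$. The decisive observation is that both correction terms in (i) and (ii) are controlled by the \emph{same} quantity $|N_-|$, producing a cancellation against the $(k+1)\beta$ on the left-hand side that leaves the tight coefficient $k + 1 - |N_-|$.
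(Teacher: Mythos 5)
The statement you were asked to prove is quoted in this paper from \cite[Theorem~2.1]{FMR23} and is not proved here, so there is no in-paper argument to compare yours against; I can only assess your proposal on its own terms. Having checked it line by line, I find it correct and complete. The reduction $d_{TV}(P,Q)=\mu^-(\mathbb{R}^d)=\beta$ is valid because $\mu^-=(Q-P)^+\le Q$ forces the negative set of the Hahn decomposition into $K$, and because $\mu(\mathbb{R}^d)=0$ makes the sup over Borel sets equal to $\mu^-(\mathbb{R}^d)$ under the paper's definition of $d_{TV}$. The test set $B_j=\pi_{H_j}(N_-^*)$ is a finite, hence Borel, subset of $H_j$, its preimage is indeed $N_-^*+H_j^\perp$, and your displayed identity for $(Q_{H_j}-P_{H_j})(B_j)$ follows from $N_-\subseteq M_j$ and the fact that $Q$ charges only $K$. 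Both transversality counts are sound: a nonzero vector ($x_i-x_{i'}$ with $i\in N_+$, $i'\in N_-$, or $z-x_i$ with $z\notin K$) lies in at most one $H_j^\perp$, so each of the two error terms, summed over $j$, is at most $|N_-|$ times ($\alpha$ and $P(\mathbb{R}^d\setminus K)$ respectively), and the identity $\alpha+P(\mathbb{R}^d\setminus K)=\beta$ closes the loop to give $(k+1-|N_-|)\beta\le\sum_j d_{TV}(P_{H_j},Q_{H_j})$ with $|N_-|\le k$. Your closing remark correctly identifies the crux: the two correction terms are controlled by the same quantity $|N_-|$, which is what yields the sharp constant $1$ rather than a lossy multiple. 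Note also that your argument only uses the transversality of $H_j^\perp$ pairwise on the nonzero difference vectors that actually arise, so it transparently exhibits where the hypothesis $H_i^\perp\cap H_j^\perp=\{0\}$ and the count $k+1$ enter; whether this coincides with the route taken in \cite{FMR23} cannot be determined from the present source.
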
	

Is there likewise a quantitative version of Theorem~\ref{T:CWelliptical}?

\section*{Acknowledgments}
Fraiman and Moreno were   supported by grant FCE-1-2019-1-156054, Ag\-encia Nacional de Investigaci\'on e Innovaci\'on, Uruguay. Ransford  was supported by grants from NSERC and the Canada Research Chairs program.

\subsection*{Supplementary material}
All the code in $\textsf{R}$ is available at:

\url{https://github.com/mrleomr/CRAMER-WOLD-ELLIPTICAL}

\bibliographystyle{amsplain}
\bibliography{biblist}

\end{document}